\newtheorem{thm}{Theorem}[section]
\newtheorem*{thm-intro}{Theorem}
\newtheorem{lemma}[thm]{Lemma}
\theoremstyle{remark}
\theoremstyle{definition}
\newtheorem{definition}[thm]{Definition}
\numberwithin{equation}{section}
\DeclareMathOperator{\Hom}{Hom}
\DeclareMathOperator{\shHom}{\underline{Hom}}
\DeclareMathOperator{\Isom}{Isom}
\DeclareMathOperator{\shAut}{\underline{Aut}}
\DeclareMathOperator{\End}{End}
\DeclareMathOperator{\shEnd}{\underline{End}}
\DeclareMathOperator{\Der}{Der}
\DeclareMathOperator{\MC}{MC}
\DeclareMathOperator{\ad}{\mathtt{ad}}
\DeclareMathOperator{\Ad}{Ad}
\DeclareMathOperator{\Conj}{Conj}
\DeclareMathOperator{\im}{Im}
\newcommand{\pr}{\mathrm{pr}}
\DeclareRobustCommand\bigop[1]{%
  \mathop{\vphantom{\int}\mathpalette\bigop@{#1}}\slimits@
}
\newcommand{\bigop@}[2]{%
  \vcenter{%
    \sbox\z@{$#1\sum$}%
    \hbox{\resizebox{2mm}{\ifx#1\displaystyle1.3\fi\dimexpr\ht\z@+\dp\z@}{$\m@th#2$}}%
  }%
}
\DeclareMathAlphabet\mathbfcal{OMS}{cmsy}{b}{n}
\newcommand{\MI}{\DOTSB\bigop{\mathbfcal{P}}}
\begin{document}

\title{Comparison of spaces associated to DGLA via higher holonomy}

\author[P.Bressler]{Paul Bressler}
\address {Departamento de Matem\'aticas, Universidad de Los Andes, Bogot\'a, Colombia} \email{paul.bressler@gmail.com}

\author[A.Gorokhovsky]{Alexander Gorokhovsky}
\address{Department of Mathematics, UCB 395,
University of Colorado, Boulder, CO~80309-0395, USA}
\email{Alexander.Gorokhovsky@colorado.edu}

\author[R.Nest]{Ryszard Nest}
\address{Department of Mathematics,
Copenhagen University, Universitetsparken 5, 2100 Copenhagen, Denmark}
 \email{rnest@math.ku.dk}

\author[B.Tsygan]{Boris Tsygan}
\address{Department of
Mathematics, Northwestern University, Evanston, IL 60208-2730, USA}
\email{b-tsygan@northwestern.edu}

\thanks{R.~Nest was supported by the Danish National Research Foundation through the Centre for Symmetry and Deformation (DNRF92).}

\begin{abstract}
For a differential graded Lie algebra $\mathfrak{g}$ whose components vanish in degrees below $-1$ we construct an explicit equivalence between the nerve of the Deligne $2$-groupoid and the simplicial set of $\mathfrak{g}$-valued differential forms introduced by V.~Hinich. The construction uses the theory of non-abelian multiplicative integration.
\end{abstract}

\maketitle

\section{Introduction}
The principal result of the present note is a construction of a weak equivalence of two spaces (simplicial sets) naturally associated with a nilpotent differential grade Lie algebra subject to certain restrictions.

To a nilpotent DGLA $\mathfrak{g}$ which satisfies the additional condition
\begin{equation}\label{vanishing condition}
\text{$\mathfrak{g}^i = 0$ for $i<-1$}
\end{equation}
P.~Deligne \cite{Del} and, independently, E.~Getzler \cite{G1} associated a (strict) $2$-groupoid which we denote $\MC^2(\mathfrak{g})$ and refer to as the Deligne $2$-groupoid. In an earlier paper by the authors (\cite{BGNT0}) it was shown that the simplicial nerve $\mathfrak{N}\MC^2(\mathfrak{g})$ of the $2$-groupoid $\MC^2(\mathfrak{g})$, $\mathfrak{g}$ a nilpotent DGLA satisfying \eqref{vanishing condition} is weakly equivalent to another simplicial set, denoted $\Sigma(\mathfrak{g})$, introduced by V. Hinich \cite{H1} by constructing natural maps from $\mathfrak{N}\MC^2(\mathfrak{g})$ and $\Sigma(\mathfrak{g})$ to a third simplicial set and showing that they are equivalences.

In this note we outline a construction of the map
\begin{equation}\label{intro: integral}
\mathbb{I}(\mathfrak{g}) \colon \Sigma(\mathfrak{g}) \to \mathfrak{N}\MC^2(\mathfrak{g})
\end{equation}
omitting technical details. The main tool in our construction is the theory of non-abelian multiplicative integration on surfaces.

In \cite{Y}, Yekutieli constructed a theory of multiplicative integration on surfaces based on Riemann products(cf. also Aref'eva's work \cite{A}; a sketch of a construction, also based on Riemann products, is due to Kontsevich, cf. \cite{K}) and does not apply in the context at hand. In \cite{Kap} Kapranov develops a theory of higher holonomy based on Chen's iterated integrals. In the present work we take the approach based on the fundamental theorem of calculus.

The paper is organized as follows. In Section \ref{section: calculus} we recall some basic notions of differential and integral calculus. In Section \ref{section: diff geom} we review the theory of connections and holonomy. In Section \ref{section: two dim hol} we define the two-dimensional holonomy of connection-curvature pairs and discuss some of its properties. In Section \ref{section: integral} we recall the definitions of the simplicial sets $\mathfrak{N}\MC^2(\mathfrak{g})$ and $\Sigma(\mathfrak{g})$, construct the map \eqref{intro: integral} and state its properties in Theorem \ref{thm: big I} and Theorem \ref{thm: I is an equivalence}.

\subsection{Notation}
Throughout the paper
\begin{itemize}
\item $k$ is a field of characteristic zero

\item for a variety $X$ over $k$: $\mathbb{A}^n_X := X\times\mathbb{A}^n_k$; for a commutative $k$-algebra $R$: $\mathbb{A}^n_R := \mathbb{A}^n_{\operatorname{Spec}R}$

\item for a variety $\pr_X \colon Y \to X$ over $X$ and a $\mathcal{O}_X$-module $\mathcal{E}$ we denote by $\mathcal{E}_Y$ the $\mathcal{O}_Y$-module $\pr_X^*\mathcal{E}$

\item for an $X$-variety $\colon Y \to X$ over $X$ we denote by $\mathtt{d}_{/X}\colon \mathcal{O}_Y \to \Omega^1_{Y/X}$ the relative differential

\item for an nilpotent $\mathcal{O}_X$-Lie algebra $\mathfrak{g}$ we denote by $\exp(\mathfrak{g})$ the algebraic group over $X$ defined as $\exp(\mathfrak{g})(Y) = \mathfrak{g}_Y(Y)$ equipped with the group structure given by the Baker-Campbell-Hausdorff series

\item for a group $G$ and $g\in G$ we denote $\Conj(g)$ the automorphism $h \mapsto ghg^{-1}$
\end{itemize}

\section{Calculus}\label{section: calculus}
\subsection{Logarithmic derivatives}
Suppose that $\mathfrak{g}$ is a nilpotent Lie algebra. For $D\in\Der(\mathfrak{g})$, $a = \exp(x)\in\exp(\mathfrak{g})$, $x \in \mathfrak{g}$. Define $D\log(a)\in \mathfrak{g}$ by
\[
D\log(a) := (Da)\cdot a^{-1}= \left(\frac{\exp(\ad(x)) - 1}{\ad(x)}\right)(Dx).
\]
For $D\in\Der(\mathfrak{g})$, $a, b \in\exp(\mathfrak{g})$, $x\in\mathfrak{g}$,
\begin{eqnarray}
D\log(ab) & = & D\log(a) + \Ad_a(D\log(b)) \\
D\log(a^{-1}) & = & -\Ad_{a^{-1}}(D\log(a)) \\
\ad_x\log(a) & = & x - \Ad_a(x)
\end{eqnarray}

For $D_1,D_2\in\Der(\mathfrak{g})$, $a\in\exp(\mathfrak{g})$
\begin{multline}\label{dlog commutator}
D_1(D_2\log(a)) - D_2(D_1\log(a)) = \\ [D_1,D_2]\log(a) + [D_1\log(a), D_2\log(a)]
\end{multline}

\subsection{Initial value problems}
Let $\pr_X \colon \mathbb{A}^1_X \to X$ denote the projection so that $\pr_{X *}\mathcal{O}_{\mathbb{A}^1_X} = \mathcal{O}_X[s]$.

Suppose that $\mathcal{E}$ is a vector bundle on $X$. For
\begin{itemize}
\item $A\in\End_{\mathcal{O}_{\mathbb{A}^1_X}}(\mathcal{E}_{\mathbb{A}^1_X})$ nilpotent
\item a section $\sigma \colon X \to \mathbb{A}^1_X$ of the projection $\pr_X$
\item a section $e \in \Gamma(X;\mathcal{E})$
\end{itemize}
there exists a unique section $u \in \Gamma(\mathbb{A}^1_X; \mathcal{E}_{\mathbb{A}^1_X})$ which satisfies
\[
\begin{cases}
\dfrac{\partial u}{\partial s} + A(u) = 0 \\
\sigma^*(u) = e .
\end{cases}
\]

\subsection{Integration}
Let $x = s\otimes 1$ and $y = 1\otimes s$ denote the coordinate functions on $\mathbb{A}^1_X\times_X\mathbb{A}^1_X$. Let $\Delta$ denote the subvariety defined by the equation $x=y$.

For $\alpha \in \Omega^1_{\mathbb{A}^1_X/X}$ we denote by
\[
\displaystyle{\int\limits_x^y \alpha} \in \Gamma(\mathbb{A}^1_X\times_X\mathbb{A}^1_X;\mathcal{O}_{\mathbb{A}^1_X\times_X\mathbb{A}^1_X})
\]
the solution of the initial value problem
\[
\begin{cases}
\dfrac{\partial u}{\partial y} = \pr_2^*\alpha \\
u\vert_\Delta = 0
\end{cases}
\]
in the unknown $u = u(x,y) \in \Gamma(\mathbb{A}^1_X\times_X\mathbb{A}^1_X;\mathcal{O}_{\mathbb{A}^1_X\times_X\mathbb{A}^1_X})$.

\section{Differential geometry}\label{section: diff geom}

\subsection{Connections and curvature}
Suppose that $\mathfrak{g}$ is a nilpotent $\mathcal{O}_X$-Lie algebra. For $\mathrm{A} \in \Gamma(\mathbb{A}^k_X; \Omega^1_{\mathbb{A}^k_X/X}\otimes\mathfrak{g})$ 
we consider connection $\nabla := \mathtt{d}_{/X} + \mathrm{A}$. Its curvature is defined by $F = F(\nabla) := \mathtt{d}_{/X}\mathrm{A} + \dfrac12 [\mathrm{A},\mathrm{A}] \in \Gamma(\mathbb{A}^k_X; \Omega^2_{\mathbb{A}^k_X/X}\otimes\mathfrak{g})$.

Suppose now that $\mathcal{E}$ is an $\mathcal{O}_X$-module and $\rho\colon \mathfrak{g} \to \shEnd_{\mathcal{O}_X}(\mathcal{E})$ is a representation of $\mathfrak{g}$. The connection $\nabla$ gives rise to the connection $\nabla^\mathcal{E}$ on $\mathcal{E}_{\mathbb{A}^k_X}$.

The sheaf $\Omega^1_{\mathbb{A}^k_X/X}$ of relative differentials is equipped with the canonical relative connection. Hence, the connection $\nabla$ induces a connection on the $\mathcal{O}_{\mathbb{A}^k_X}$-module $\Omega^1_{\mathbb{A}^k_X/X}\otimes\mathcal{E}$ which will be denoted $\nabla^\mathcal{E}$ as well.

An example of the above situation is given by the adjoint representation $\ad \colon \mathfrak{g} \to \Der(\mathfrak{g})$ giving rise to the connection $\nabla^\mathfrak{g}$.

\subsection{Logarithmic covariant derivative}

For $u = \exp(x)\in\exp(\mathfrak{g})(\mathbb{A}^k_X)$, let
\[
d \log (u):= \left(\frac{\exp(\ad(x)) - 1}{\ad(x)}\right)(\mathtt{d}x)
\]
and
\[
\nabla\log(u) := d \log u + \mathrm{A} .
\]

\begin{lemma}
For $\xi_1, \xi_2 \in \mathcal{T}_{\mathbb{A}^k_X/X}$ such that $[\xi_1, \xi_2] = 0$ the identity
\begin{multline}\label{commutator curvature}
\nabla^\mathfrak{g}_{\xi_1}\nabla_{\xi_2} \log u-
\nabla^\mathfrak{g}_{\xi_2}\nabla_{\xi_1} \log u
= \\
[\nabla_{\xi_1} \log u, \nabla_{\xi_2} \log u]+
\iota_{\xi_1}\iota_{\xi_2}F
\end{multline}
holds.
\end{lemma}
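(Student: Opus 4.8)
The plan is to reduce the identity to the algebraic identities for logarithmic derivatives established in Section~\ref{section: calculus}, by unwinding all the covariant objects into their scalar-coefficient constituents. Write $A_i := \iota_{\xi_i}\mathrm{A} \in \mathfrak{g}_{\mathbb{A}^k_X}$, and recall that from the definition $\nabla\log u = d\log u + \mathrm{A}$, contraction with $\xi_i$ gives
\[
\nabla_{\xi_i}\log u = \xi_i\log u + A_i,
\]
where $\xi_i\log u := \iota_{\xi_i}(d\log u)$ is exactly the logarithmic derivative of $u$ along the relative vector field $\xi_i$, so that the formulas for $D\log$ of Section~\ref{section: calculus} apply with $D = \xi_i$. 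I would treat $\xi_1,\xi_2$ throughout as the commuting derivations they induce on $\mathfrak{g}$-valued functions, which is legitimate because $\mathfrak{g}$ is pulled back along $\pr_X$ and hence the relative vector fields commute with the Lie structure.

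First I would expand the left-hand side. Since $\nabla^{\mathfrak{g}}$ is the connection $\mathtt{d}_{/X} + \ad(\mathrm{A})$ attached to the adjoint representation, for a section $s$ of $\mathfrak{g}$ one has $\nabla^{\mathfrak{g}}_{\xi_1}s = \xi_1 s + [A_1, s]$. Applying this to $s = \nabla_{\xi_2}\log u = \xi_2\log u + A_2$, subtracting the symmetric term, and expanding the brackets yields
\begin{multline*}
\nabla^{\mathfrak{g}}_{\xi_1}\nabla_{\xi_2}\log u - \nabla^{\mathfrak{g}}_{\xi_2}\nabla_{\xi_1}\log u
= \big(\xi_1(\xi_2\log u) - \xi_2(\xi_1\log u)\big) \\
+ (\xi_1 A_2 - \xi_2 A_1) + [A_1, \xi_2\log u] + [\xi_1\log u, A_2] + 2[A_1, A_2].
\end{multline*}
The crucial input is the commutator identity \eqref{dlog commutator}, which with $[\xi_1,\xi_2]=0$ collapses the first parenthesis to $[\xi_1\log u, \xi_2\log u]$; this is the single place the hypothesis enters, and it simultaneously annihilates the $[\xi_1,\xi_2]\log u$ term here and the $\mathrm{A}([\xi_1,\xi_2])$ term of the curvature below.

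Next I would expand the right-hand side. Bilinearity of the bracket gives
\begin{multline*}
[\nabla_{\xi_1}\log u, \nabla_{\xi_2}\log u] = [\xi_1\log u, \xi_2\log u] + [\xi_1\log u, A_2] \\
+ [A_1, \xi_2\log u] + [A_1, A_2],
\end{multline*}
while contracting $F = \mathtt{d}_{/X}\mathrm{A} + \tfrac12[\mathrm{A},\mathrm{A}]$ against the commuting pair produces $\iota_{\xi_1}\iota_{\xi_2}F = (\xi_1 A_2 - \xi_2 A_1) + [A_1, A_2]$, the $\mathrm{A}([\xi_1,\xi_2])$ contribution dropping out. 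Adding the two and comparing with the left-hand side, every term matches: the $[\xi_1\log u, \xi_2\log u]$, the two mixed brackets, and the first-order part $\xi_1 A_2 - \xi_2 A_1$ agree, and the three copies of $[A_1, A_2]$ combine to the required $2[A_1,A_2]$.

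I expect the main obstacle to be purely one of bookkeeping, namely making the signs and orderings consistent. The delicate point is the convention hidden in $\iota_{\xi_1}\iota_{\xi_2}F$ (whether it denotes $F(\xi_1,\xi_2)$ or $F(\xi_2,\xi_1)$) together with the normalization of $[\mathrm{A},\mathrm{A}]$ against the factor $\tfrac12$; these jointly decide whether the $[A_1,A_2]$ terms cancel or reinforce, and the identity holds precisely for the consistent choice recorded above. A secondary point requiring care is the justification that $\xi_i$ acts on the $\mathfrak{g}$-valued section $\xi_j\log u$ as an honest derivation to which \eqref{dlog commutator} applies, which again follows from $\mathfrak{g}$ being pulled back along $\pr_X$.
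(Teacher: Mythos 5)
Your proof is correct. The paper states this lemma without proof, so there is nothing to compare against line by line; but your argument — expand both sides in terms of $\xi_i\log u$ and $A_i = \iota_{\xi_i}\mathrm{A}$, and use the commutator identity \eqref{dlog commutator} for logarithmic derivatives to handle the second-order terms — is clearly the intended one, since \eqref{dlog commutator} is set up in Section~\ref{section: calculus} precisely for this purpose and is the only nontrivial input needed. Your handling of the two convention-dependent points (that $\iota_{\xi_1}\iota_{\xi_2}F$ must be read as $F(\xi_1,\xi_2)$, and that $\tfrac12[\mathrm{A},\mathrm{A}](\xi_1,\xi_2) = [A_1,A_2]$) is also right: these are exactly the choices under which \eqref{commutator curvature} holds, and they are consistent with how the paper later applies the lemma in the proof of \eqref{der hol param}.
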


\subsection{Holonomy}
Suppose that $\mathfrak{g}$ is a nilpotent $\mathcal{O}_X$-Lie algebra. For $\mathrm{A} \in \Gamma(\mathbb{A}^1_X; \Omega^1_{\mathbb{A}^1_X/X}\otimes\mathfrak{g})$, $\nabla := \mathtt{d}_{/X} + \mathrm{A}$ we denote by
\[
(x,y) \mapsto \MI_x^y \nabla \in \exp(\mathfrak{g})(\mathbb{A}^1_X\times_X\mathbb{A}^1_X)
\]
the solution of the initial value problem
\[
\begin{cases}
\pr_2^*\nabla_{\frac{\partial~}{\partial y}}\log(u) = 0 \\
u\vert_\Delta = 1 ,
\end{cases}
\]
where $\Delta$ is the subvariety defined by the equation $x=y$.

Suppose that $\mathcal{E}$ is an $\mathcal{O}_X$-module and $\rho\colon \mathfrak{g} \to \shEnd_{\mathcal{O}_X}(\mathcal{E})$ is a representation of $\mathfrak{g}$ which is nilpotent in the sense that there exists $N$ such that $\im(\rho)^N = 0$. The representation $\rho$ induces the map of groups $\exp(\rho) \colon \exp(\mathfrak{g}) \to \shAut(\mathcal{E})$. Since the two compositions $\mathbb{A}^1_X\times_X\mathbb{A}^1_X \xrightarrow{\pr_i} \mathbb{A}^1_X \xrightarrow{\pr_X} X$, where $\pr_i$ are the canonical projections, are equal it follows that $\shHom_{\mathcal{O}_{\mathbb{A}^1_X \times_X \mathbb{A}^1_X}}(\pr_1^*\mathcal{E}_{\mathbb{A}^1_X}, \pr_2^*\mathcal{E}_{\mathbb{A}^1_X}) = \shEnd_{\mathcal{O}_{\mathbb{A}^1_X \times_X \mathbb{A}^1_X}}(\mathcal{E}_{\mathbb{A}^1_X \times_X \mathbb{A}^1_X})$ and there is a canonical map
\begin{equation}\label{exp to isom}
\exp(\mathfrak{g})(\mathbb{A}^1_X\times_X\mathbb{A}^1_X) \to \Isom(\pr_1^*\mathcal{E}_{\mathbb{A}^1_X}, \pr_2^*\mathcal{E}_{\mathbb{A}^1_X}) .
\end{equation}

We denote by $\MI\limits_x^y \nabla^\mathcal{E}$ the image of $\MI\limits_x^y \nabla$ under \eqref{exp to isom}.

\begin{lemma}\label{identities} The following useful identities hold.
\[
\left(\MI_x^y \nabla^\mathcal{E}\right)\frac{\partial~}{\partial y}\left(\MI_x^y \nabla^\mathcal{E}\right)^{-1} = \nabla^\mathcal{E}_{\frac{\partial~}{\partial y}}
\]
\[
\left(\MI_x^y \nabla\right) \cdot\left( \MI_y^z \nabla\right) = \MI_x^z \nabla
\]
\end{lemma}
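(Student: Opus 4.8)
The plan is to derive both identities from the defining initial value problem of $\MI_x^y\nabla$, combined with the logarithmic-derivative calculus of Section~\ref{section: calculus} and the uniqueness of solutions to first-order problems. The step common to both is to unwind the defining condition: since $\nabla\log(u)=d\log u+\mathrm{A}$ and $d\log u$ contracts with $\frac{\partial}{\partial y}$ to the right logarithmic derivative $\big(\frac{\partial}{\partial y}u\big)u^{-1}$, the equation $\pr_2^*\nabla_{\frac{\partial}{\partial y}}\log\big(\MI_x^y\nabla\big)=0$ is equivalent to
\[
\Big(\tfrac{\partial}{\partial y}\MI_x^y\nabla\Big)\big(\MI_x^y\nabla\big)^{-1} = -A_y, \qquad A_y := \iota_{\frac{\partial}{\partial y}}\pr_2^*\mathrm{A},
\]
together with the normalization $\MI_x^y\nabla\vert_{x=y}=1$.

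For the first identity I would use that $\MI_x^y\nabla^\mathcal{E}=\exp(\rho)\big(\MI_x^y\nabla\big)$ is the image of $\MI_x^y\nabla$ under a homomorphism of groups, together with the naturality of the right logarithmic derivative with respect to homomorphisms (namely $D\log(\phi(u))=d\phi(D\log u)$, here with $\phi=\exp(\rho)$ and $d\phi=\rho$), so that $\big(\frac{\partial}{\partial y}\MI_x^y\nabla^\mathcal{E}\big)\big(\MI_x^y\nabla^\mathcal{E}\big)^{-1}=\rho(-A_y)$. Writing $U:=\MI_x^y\nabla^\mathcal{E}$ and expanding the conjugation on a section $e$, one gets $U\frac{\partial}{\partial y}\big(U^{-1}e\big)=-\big(\frac{\partial}{\partial y}U\big)U^{-1}e+\frac{\partial e}{\partial y}=\frac{\partial e}{\partial y}+\rho(A_y)e=\nabla^\mathcal{E}_{\frac{\partial}{\partial y}}e$, which is the claim. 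This is essentially the defining problem read in operator form: it records that $U$ sends $y$-constant sections to $\nabla^\mathcal{E}$-horizontal ones.

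For the composition identity I would regard all three terms as sections over the triple fibre product $\mathbb{A}^1_X\times_X\mathbb{A}^1_X\times_X\mathbb{A}^1_X$ with coordinates $x,y,z$, and show that both sides solve one and the same initial value problem in the transport variable, so that they agree by uniqueness (with the remaining coordinates and $X$ absorbed into the base). On the locus $z=y$ the factor $\MI_y^z\nabla$ specializes to $1$ and both products reduce to $\MI_x^y\nabla$, matching the value of $\MI_x^z\nabla$ there; so it remains to match the differential equation. Differentiating the product by the rule $\frac{\partial}{\partial z}\log(ab)=\frac{\partial}{\partial z}\log(a)+\Ad_a\big(\frac{\partial}{\partial z}\log(b)\big)$ and using that one factor is independent of $z$ while the other satisfies $\frac{\partial}{\partial z}\log=-A_z$, the $\Ad$-contributions collapse and reproduce the defining relation $\frac{\partial}{\partial z}\log\big(\MI_x^z\nabla\big)=-A_z$; uniqueness then identifies the product with $\MI_x^z\nabla$.

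The routine part is the sign- and order-bookkeeping inside the logarithmic-derivative calculus. The one genuinely delicate point, and the step I expect to be the main obstacle, is arranging that the $\Ad$-terms produced by the product rule cancel rather than accumulate: this forces the precise order of the two factors and simultaneously pins down the conventions (right versus left logarithmic derivative, and the sign in the group law) under which the identity is stated. Once that alignment is fixed, both assertions follow formally from the product and inverse rules of Section~\ref{section: calculus} together with the uniqueness of solutions to the initial value problems.
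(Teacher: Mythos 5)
The paper states this lemma without proof, so the relevant comparison is with the technique the paper visibly has in mind: both identities are instances of the uniqueness-of-solutions argument that the paper carries out explicitly in its proof of the subsequent lemma, equation \eqref{der hol param}. Your strategy is exactly that, and your proof of the first identity is correct and complete: naturality of the right logarithmic derivative under $\exp(\rho)$ turns the defining equation into $\bigl(\tfrac{\partial}{\partial y}U\bigr)U^{-1}=-\rho(A_y)$ for $U=\MI_x^y\nabla^{\mathcal{E}}$, and then $U\tfrac{\partial}{\partial y}(U^{-1}e)=\tfrac{\partial e}{\partial y}+\rho(A_y)e=\nabla^{\mathcal{E}}_{\partial/\partial y}e$ as required.

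For the composition identity, however, the point you defer (``arranging that the $\Ad$-terms cancel rather than accumulate'') is not routine bookkeeping; it is the entire verification, and when one carries it out with the paper's fixed conventions ($D\log(a)=(Da)a^{-1}$, $D\log(ab)=D\log(a)+\Ad_a(D\log(b))$, standard BCH product) it comes out \emph{opposite} to the displayed order. Taking $a=\MI_x^y\nabla$ ($z$-independent) and $b=\MI_y^z\nabla$, one gets $\tfrac{\partial}{\partial z}\log(ab)=\Ad_a\bigl(\tfrac{\partial}{\partial z}\log b\bigr)=-\Ad_a(A_z)$: the $\Ad$-term survives, so $ab$ does not solve the defining problem for $\MI_x^z\nabla$. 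The collapse happens only for the other order: $\tfrac{\partial}{\partial z}\log(ba)=\tfrac{\partial}{\partial z}\log(b)+\Ad_b(0)=-A_z$, with $ba\vert_{z=y}=\MI_x^y\nabla$, so uniqueness yields $\MI_y^z\nabla\cdot\MI_x^y\nabla=\MI_x^z\nabla$. This operator-order form is the one the paper actually relies on elsewhere: the definition of $\MI_\gamma\nabla$ for special broken lines places the holonomies of later segments on the left, and the second step in the paper's proof of \eqref{der hol param} uses $(\MI_{x_1}^{y_1}\nabla^{\mathfrak{g}})^{-1}(\MI_{y_1}^{y}\nabla^{\mathfrak{g}})^{-1}=(\MI_{x_1}^{y}\nabla^{\mathfrak{g}})^{-1}$, i.e.\ precisely $\MI_{y_1}^{y}\nabla^{\mathfrak{g}}\cdot\MI_{x_1}^{y_1}\nabla^{\mathfrak{g}}=\MI_{x_1}^{y}\nabla^{\mathfrak{g}}$. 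So your plan is the right one, but to finish it you must commit to this order and observe that the lemma's display, read with the paper's own group law, has its two factors transposed (equivalently, it is written in path-composition notation); as written, your claim that the $\Ad$-contributions collapse is false for the product you are nominally differentiating.
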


\subsection{Holonomy and curvature}
Suppose that $\mathfrak{g}$ is a nilpotent $\mathcal{O}_X$-Lie algebra, $\mathrm{A} \in \Gamma(\mathbb{A}^2_X; \Omega^1_{\mathbb{A}^2_X/X}\otimes\mathfrak{g})$, $\nabla := \mathtt{d}_{/X} + \mathrm{A}$, $F = F(\nabla) := \mathtt{d}_{/X}\mathrm{A} + \dfrac12 [\mathrm{A},\mathrm{A}] \in \Gamma(\mathbb{A}^2_X; \Omega^2_{\mathbb{A}^2_X/X}\otimes\mathfrak{g})$.

The second projection $\mathbb{A}^2_X \to \mathbb{A}^1_X$ identifies $\mathbb{A}^2_X$ with $\mathbb{A}^1_{\mathbb{A}^1_X}$. Let $\overline{\mathrm{A}} \in \Gamma(\mathbb{A}^2_X; \Omega^1_{\mathbb{A}^2_X/\mathbb{A}^1_X}\otimes\mathfrak{g})$ denote the image of $\mathrm{A}$,  $\overline{\nabla} := d_{/\mathbb{A}^1_X} + \overline{\mathrm{A}}$. Hence, the holonomy
\[
((x_1,y_2),(y_1,y_2)) \mapsto
\MI_{x_1}^{y_1} \overline{\nabla} \in \exp(\mathfrak{g})(\mathbb{A}^2_X\times_{\mathbb{A}^1_X}\mathbb{A}^2_X)
\]
is defined and will be denoted, by abuse of notation, by $\displaystyle{\MI_{x_1}^{y_1} \nabla}$.

\begin{lemma}
\begin{equation}\label{der hol param}
\nabla_{\frac{\partial~}{\partial y_2}}\log\left(\MI_{x_1}^{y_1} \nabla\right) = \int\limits_{t=x_1}^{t=y_1}(\MI_{y_1}^t \nabla^\mathfrak{g})^{-1}(\iota_{\frac{\partial~}{\partial y_2}}F)
\end{equation}
\end{lemma}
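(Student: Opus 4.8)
The plan is to fix $x_1$ and $y_2$ and to view the two sides of \eqref{der hol param} as sections of $\mathfrak{g}$ over the remaining affine line in the variable $y_1$, and to show that they solve one and the same linear initial value problem of the kind discussed in Section \ref{section: calculus}; uniqueness for such problems then forces the two sides to agree. Write $L := \nabla_{\frac{\partial~}{\partial y_2}}\log\left(\MI_{x_1}^{y_1}\nabla\right)$ for the left-hand side and $R$ for the right-hand side.

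First I would derive a differential equation in $y_1$ for $L$. Since $\frac{\partial~}{\partial y_1}$ and $\frac{\partial~}{\partial y_2}$ commute, the identity \eqref{commutator curvature} applies with $\xi_1 = \frac{\partial~}{\partial y_1}$, $\xi_2 = \frac{\partial~}{\partial y_2}$ and $u = \MI_{x_1}^{y_1}\nabla$. The defining property of the holonomy is $\nabla_{\frac{\partial~}{\partial y_1}}\log\left(\MI_{x_1}^{y_1}\nabla\right) = 0$; feeding this into \eqref{commutator curvature} annihilates both the second covariant derivative on the left and the bracket on the right, leaving
\[
\nabla^\mathfrak{g}_{\frac{\partial~}{\partial y_1}} L = \iota_{\frac{\partial~}{\partial y_1}}\iota_{\frac{\partial~}{\partial y_2}} F .
\]
Thus $L$ solves the inhomogeneous linear equation $\nabla^\mathfrak{g}_{\frac{\partial~}{\partial y_1}}(-) = \iota_{\frac{\partial~}{\partial y_1}}\iota_{\frac{\partial~}{\partial y_2}} F$ along $y_1$.

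Next I would identify $R$ as the solution of this very equation obtained by variation of parameters. The one obstruction is that the integrand of $R$ depends on the upper limit $y_1$ through the base point of $\MI_{y_1}^t\nabla^\mathfrak{g}$; I would remove it with the composition law of Lemma \ref{identities}. Since $\left(\MI_{y_1}^t\nabla^\mathfrak{g}\right)^{-1} = \left(\MI_{x_1}^{y_1}\nabla^\mathfrak{g}\right)\left(\MI_{x_1}^t\nabla^\mathfrak{g}\right)^{-1}$, the factor $\MI_{x_1}^{y_1}\nabla^\mathfrak{g}$ pulls out of the integral and
\[
R = \left(\MI_{x_1}^{y_1}\nabla^\mathfrak{g}\right)\int\limits_{t=x_1}^{t=y_1}\left(\MI_{x_1}^t\nabla^\mathfrak{g}\right)^{-1}\left(\iota_{\frac{\partial~}{\partial y_2}} F\right),
\]
whose integrand no longer involves $y_1$. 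Differentiating, the first identity of Lemma \ref{identities} lets me replace $\left(\MI_{x_1}^{y_1}\nabla^\mathfrak{g}\right)\frac{\partial~}{\partial y_1}\left(\MI_{x_1}^{y_1}\nabla^\mathfrak{g}\right)^{-1}$ by $\nabla^\mathfrak{g}_{\frac{\partial~}{\partial y_1}}$, while the fundamental theorem of calculus (the defining property of $\int_x^y$ from Section \ref{section: calculus}) supplies the derivative of the integral as its integrand evaluated at $t = y_1$. The transport factor and its inverse then cancel, giving $\nabla^\mathfrak{g}_{\frac{\partial~}{\partial y_1}} R = \iota_{\frac{\partial~}{\partial y_1}}\iota_{\frac{\partial~}{\partial y_2}} F$, exactly the equation satisfied by $L$.

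It then remains to compare $L$ and $R$ on the diagonal $\Delta = \{x_1 = y_1\}$ and to conclude by uniqueness. On $\Delta$ the holonomy is the identity and the range of integration is empty, so $R$ vanishes there; the corresponding restriction of $L$ must be read off from the explicit form of the logarithmic covariant derivative at the identity. I expect this comparison on $\Delta$ to be the main obstacle: it is where the precise normalizations of $\log$ and of the double contraction $\iota_{\frac{\partial~}{\partial y_1}}\iota_{\frac{\partial~}{\partial y_2}} F$ enter, and it is the step that singles out the solution. Once the two restrictions to $\Delta$ are seen to coincide, the uniqueness of solutions of initial value problems from Section \ref{section: calculus} — available because $\ad$ acts nilpotently, so the relevant operator is nilpotent — yields $L = R$ identically, which is \eqref{der hol param}.
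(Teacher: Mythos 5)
Your proposal is correct and is essentially the paper's own proof: the paper likewise shows that both sides of \eqref{der hol param} solve the initial value problem $\nabla^\mathfrak{g}_{\frac{\partial~}{\partial y_1}}\Phi = \iota_{\frac{\partial~}{\partial y_1}}\iota_{\frac{\partial~}{\partial y_2}}F$, $\Phi\vert_{x_1=y_1}=0$ --- the left-hand side via \eqref{commutator curvature} and the defining equation $\nabla_{\frac{\partial~}{\partial y_1}}\log\bigl(\MI_{x_1}^{y_1}\nabla\bigr)=0$, the right-hand side via the two identities of Lemma \ref{identities} and the fundamental theorem of calculus --- and concludes by uniqueness. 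The one step you flag as the ``main obstacle,'' the vanishing of the left-hand side on the diagonal, is precisely what the paper dismisses as clear, and it is: reading $\nabla_{\frac{\partial~}{\partial y_2}}\log u$ as the logarithmic covariant derivative of $u$ viewed as a section of the Hom-bundle $\shHom(\pr_1^*(\cdot),\pr_2^*(\cdot))$, so that it contains both $\iota_{\frac{\partial~}{\partial y_2}}\pr_2^*\mathrm{A}$ and $-\Ad_u\bigl(\iota_{\frac{\partial~}{\partial y_2}}\pr_1^*\mathrm{A}\bigr)$, its restriction to $\{x_1=y_1\}$ is the derivative of the constant section $u\equiv 1$ in a direction tangent to the diagonal, where $(\tfrac{\partial~}{\partial y_2}u)u^{-1}=0$ and the two pullbacks of $\mathrm{A}$ cancel because $\Ad_1=\id$.
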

\begin{proof}
We will show that both sides of \eqref{der hol param} are solutions of the initial value problem
\[
\begin{cases}
\nabla^\mathfrak{g}_{\frac{\partial~}{\partial y_1}}\Phi =  \iota_{\frac{\partial~}{\partial y_1}} \iota_{\frac{\partial~}{\partial y_2}}F\\
\Phi\vert_{x_1=y_1} = 0 .
\end{cases}
\]
It is clear that both sides of \eqref{der hol param} satisfy the initial condition. By \eqref{commutator curvature},
\begin{multline*}
\nabla^\mathfrak{g}_{\frac{\partial~}{\partial y_1}}\nabla_{\frac{\partial~}{\partial y_2}} \log \left(\MI_{x_1}^{y_1} \nabla\right)-
\nabla^\mathfrak{g}_{\frac{\partial~}{\partial y_2}}\nabla_{\frac{\partial~}{\partial y_1}} \log \left(\MI_{x_1}^{y_1} \nabla\right)
=\\
[\nabla_{\frac{\partial~}{\partial y_1}} \log \left(\MI_{x_1}^{y_1} \nabla\right), \nabla_{\frac{\partial~}{\partial y_2}} \log \left(\MI_{x_1}^{y_1} \nabla\right)]+
\iota_{\frac{\partial~}{\partial y_1}}\iota_{\frac{\partial~}{\partial y_2}}F
\end{multline*}
 By definition, $\nabla_{\frac{\partial~}{\partial y_1}}\log\left(\MI\limits_{x_1}^{y_1} \nabla\right) = 0$. Hence,
\[
\nabla^\mathfrak{g}_{\frac{\partial~}{\partial y_1}}\nabla_{\frac{\partial~}{\partial y_2}} \log \left(\MI_{x_1}^{y_1} \nabla\right)=\iota_{\frac{\partial~}{\partial y_1}}\iota_{\frac{\partial~}{\partial y_2}}F
\]
which is to say, the left-hand side satisfies the differential equation. Differentiating the right-hand side we obtain
\begin{multline*}
\nabla^\mathfrak{g}_{\frac{\partial~}{\partial y_1}}\int\limits_{x_1}^{y_1}(\MI_{y_1}^y \nabla^\mathfrak{g})^{-1}(\iota_{\frac{\partial~}{\partial y_2}}F) \\
= \left(\MI_{x_1}^{y_1}\nabla^\mathfrak{g}\right)\frac{\partial~}{\partial y_1}\left(\MI_{x_1}^{y_1}\nabla^\mathfrak{g}\right)^{-1}\int\limits_{x_1}^{y_1}(\MI_{y_1}^y \nabla^\mathfrak{g})^{-1}(\iota_{\frac{\partial~}{\partial y_2}}F) \\
= \left(\MI_{x_1}^{y_1}\nabla^\mathfrak{g}\right)\frac{\partial~}{\partial y_1} \int\limits_{x_1}^{y_1}(\MI_{x_1}^y \nabla^\mathfrak{g})^{-1}(\iota_{\frac{\partial~}{\partial y_2}}F) = \left(\MI_{x_1}^{y_1}\nabla^\mathfrak{g}\right)\iota_{\frac{\partial~}{\partial y_1}}(\MI_{x_1}^{y_1} \nabla^\mathfrak{g})^{-1}(\iota_{\frac{\partial~}{\partial y_2}}F) \\
= \iota_{\frac{\partial~}{\partial y_1}}\iota_{\frac{\partial~}{\partial y_2}}F
\end{multline*}
Thus, the right-hand side satisfies the differential equation as well.
\end{proof}

For a nilpotent $\mathcal{O}_X$-Lie algebra $\mathfrak{g}$, $\mathrm{A} \in \Gamma(\mathbb{A}^k_X; \Omega^1_{\mathbb{A}^k_X/X}\otimes\mathfrak{g})$, $\nabla := \mathtt{d}_{/X} + \mathrm{A}$, a pair of points $(x_1,\ldots,x_k), (y_1,\ldots, y_k) \in \mathbb{A}^k_X(X)$ and an index $1\leqslant j\leqslant k$ such that $x_i = y_i$ for $i\neq j$ let
\[
\MI_{x_j}^{y_j}\nabla := \MI_{x_j}^{y_j}\overline{\nabla}^j ,
\]
where $Y = \mathbb{A}^{k-1}_X$ and $\mathbb{A}^k_X \to Y$ is the projection along the $j^{\text{th}}$ coordinate, $\overline{\nabla}^j = \mathtt{d}_{/Y} + \overline{\mathrm{A}}^j$, $\overline{\mathrm{A}}^j \in \Gamma(\mathbb{A}^1_Y; \Omega^1_{\mathbb{A}^k_Y/Y}\otimes\mathfrak{g})$ is the image of $\mathrm{A}$.

\subsection{Special broken lines}
\begin{definition}\label{def: special broken line}
A \emph{special broken line (of length $n$)} $\gamma$ in $\mathbb{A}^k_X$ is a sequence of points $(p^{(i)})_{i=0}^n$, $p^{(i)} = (p^{(i)}_1,\ldots, p^{(i)}_k)$ such that for any $0 \leqslant i < n-1$ there exists a $1\leqslant j_i\leqslant k$ such that $p^{(i)}_l = p^{(i+1)}_l$ for all $l\neq j_i$.

A special broken line is \emph{closed} if $p^{(0)} = p^{(n)}$.
\end{definition}

For a nilpotent $\mathcal{O}_{\mathbb{A}^k_X}$-Lie algebra $\mathfrak{g}$, $\mathrm{A} \in \Gamma(\mathbb{A}^k_X; \Omega^1_{\mathbb{A}^k_X/X}\otimes\mathfrak{g})$, $\nabla := \mathtt{d}_{/X} + \mathrm{A}$ and a special broken line $\gamma = (p^{(i)})_{i=0}^n$ in $\mathbb{A}^k_X$, we define the \emph {holonomy of $\nabla$ along $\gamma$} as the product
\[
\MI_\gamma \nabla := \left(\MI_{p^{(n-1)}_{j_{n-1}}}^{p^{(n)}_{j_{n-1}}} \nabla\right) \cdot \left(\MI_{p^{(n-1)}_{j_{n-2}}}^{p^{(n-1)}_{j_{n-2}}} \nabla\right)\cdot \cdots \cdot \left(\MI_{p^{(0)}_{j_{0}}}^{p^{(1)}_{j_{0}}}\nabla\right)
\]

For a special broken line $\gamma = (p^{(i)})_{i=0}^n$ we denote by $-\gamma$ the special broken line $(q^{(i)})_{i=0}^n$ with $q^{(i)} = p^{(n-i)}$.

\begin{lemma}
\[
\MI_{-\gamma} \nabla = (\MI_\gamma \nabla)^{-1}
\]
\end{lemma}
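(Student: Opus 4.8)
The plan is to reduce the statement to the single-segment case, where it follows at once from the composition rule of Lemma \ref{identities}, and then to reassemble the segment holonomies using the fact that inverting a product in a group reverses the order of the factors and inverts each one. No analytic input beyond the holonomy identities already established is required; the work is purely combinatorial bookkeeping.

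First I would record the single-segment reversal identity. For a pair of sections differing only in the $j$-th coordinate, setting $z = x$ in the composition identity of Lemma \ref{identities} gives
\[
\left(\MI_x^y \nabla\right) \cdot \left(\MI_y^x \nabla\right) = \MI_x^x \nabla .
\]
Since the holonomy along the trivial segment is the identity --- this is precisely the initial condition $u\vert_\Delta = 1$, so that $\MI_x^x \nabla = 1$ --- we obtain
\[
\MI_y^x \nabla = \left(\MI_x^y \nabla\right)^{-1} .
\]

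Next I would track the combinatorics of the reversed line. Writing $\gamma = (p^{(i)})_{i=0}^n$ with successive distinguished coordinates $j_0,\ldots,j_{n-1}$, the reversal $-\gamma = (q^{(i)})_{i=0}^n$ has $q^{(i)} = p^{(n-i)}$, so its $i$-th segment joins $p^{(n-i)}$ to $p^{(n-i-1)}$ and changes exactly the coordinate $j_{n-i-1}$. By the single-segment reversal identity, the holonomy of this segment is
\[
\MI_{p^{(n-i)}_{j_{n-i-1}}}^{p^{(n-i-1)}_{j_{n-i-1}}} \nabla = \left(\MI_{p^{(n-i-1)}_{j_{n-i-1}}}^{p^{(n-i)}_{j_{n-i-1}}} \nabla\right)^{-1} ,
\]
that is, the inverse of the $(n-i-1)$-th segment holonomy of $\gamma$. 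Substituting these into the definition of $\MI_{-\gamma}\nabla$ and reindexing, the product for $-\gamma$ collapses to
\[
\left(\MI_{p^{(0)}_{j_0}}^{p^{(1)}_{j_0}} \nabla\right)^{-1} \cdot \left(\MI_{p^{(1)}_{j_1}}^{p^{(2)}_{j_1}} \nabla\right)^{-1} \cdots \left(\MI_{p^{(n-1)}_{j_{n-1}}}^{p^{(n)}_{j_{n-1}}} \nabla\right)^{-1} ,
\]
which is exactly what one gets by inverting the product defining $\MI_\gamma \nabla$. This yields $\MI_{-\gamma}\nabla = (\MI_\gamma \nabla)^{-1}$.

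The only genuine mathematical content is the single-segment reversal, immediate from Lemma \ref{identities}. The main point demanding care is the index tracking: one must verify that the distinguished coordinate of the $i$-th segment of $-\gamma$ is $j_{n-i-1}$ and that the factor ordering of the reversed line matches, term by term, the order-reversing inversion of the defining product for $\MI_\gamma\nabla$. Once the indices are aligned correctly there is no further obstacle.
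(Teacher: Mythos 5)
Your proof is correct and is precisely the intended argument: the paper states this lemma with no proof at all (technical details are omitted throughout), and the natural justification is the one you give --- specializing the composition law of Lemma \ref{identities} at $z=x$ and using the initial condition $u\vert_\Delta = 1$ to get the single-segment reversal $\MI_y^x\nabla = (\MI_x^y\nabla)^{-1}$, then inverting the defining product factor by factor in reverse order. Your index bookkeeping (the $i$-th segment of $-\gamma$ runs from $p^{(n-i)}$ to $p^{(n-i-1)}$ along coordinate $j_{n-i-1}$) is also accurate.
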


For a closed special broken line $\gamma = (p^{(i)})_{i=0}^n$ we denote by $\tau\gamma$ the closed special broken line $(q^{(i)})_{i=0}^n$ with $q^{(i)} = p^{(i+1 \pmod{n+1})}$.

\begin{lemma}
\[
\MI_{\tau\gamma} \nabla = \Conj(\MI_{p^{(0)}_{j_0}}^{p^{(1)}_{j_0}} \nabla)(\MI_\gamma \nabla)
\]
where $j_0$ is as in Definition \ref{def: special broken line}.
\end{lemma}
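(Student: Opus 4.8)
The plan is to reduce the statement to an elementary identity in the group $\exp(\mathfrak{g})$, using only the definition of the holonomy along a special broken line as an ordered product of segment holonomies; none of the differential-geometric lemmas are needed beyond that definition. Write
\[
H_i := \MI_{p^{(i)}_{j_i}}^{p^{(i+1)}_{j_i}}\nabla \in \exp(\mathfrak{g})
\]
for the holonomy along the $i^{\text{th}}$ segment of $\gamma$, so that by definition
\[
\MI_\gamma\nabla = H_{n-1} H_{n-2}\cdots H_1 H_0 .
\]
In particular the innermost (rightmost) factor is $H_0 = \MI_{p^{(0)}_{j_0}}^{p^{(1)}_{j_0}}\nabla$, which is precisely the element by which we are asked to conjugate.

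First I would unwind the definition of $\tau\gamma$. Since $\gamma$ is closed, $p^{(n)} = p^{(0)}$, and the cyclic shift produces the broken line whose consecutive vertices are $p^{(1)}, p^{(2)},\ldots, p^{(n-1)}, p^{(0)}, p^{(1)}$. Its segments are therefore exactly the segments of $\gamma$ traversed in the cyclically rotated order, the segment $p^{(0)}\to p^{(1)}$ (with direction index $j_0$ and holonomy $H_0$) now occurring last; each segment retains its direction index, so the associated factor is unchanged. Reading off the defining product for $\tau\gamma$ gives
\[
\MI_{\tau\gamma}\nabla = H_0 H_{n-1} H_{n-2}\cdots H_1 .
\]

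It remains to compare the two ordered products. Setting $g := H_0$, one computes
\[
\Conj(g)(\MI_\gamma\nabla) = g\,(\MI_\gamma\nabla)\,g^{-1} = H_0\,(H_{n-1}\cdots H_1 H_0)\,H_0^{-1} = H_0 H_{n-1}\cdots H_1 ,
\]
which is exactly $\MI_{\tau\gamma}\nabla$. The only group-theoretic input is the cancellation of the rightmost factor $H_0$ of $\MI_\gamma\nabla$ against $g^{-1}$.

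The one point demanding care is the bookkeeping in the definition of $\tau\gamma$: the indexing must be read so that $\tau\gamma$ is genuinely closed, i.e. $q^{(0)} = q^{(n)} = p^{(1)}$, which forces the last segment of $\tau\gamma$ to be $p^{(0)}\to p^{(1)}$ with direction index $j_0$ — namely the first segment $H_0$ of $\gamma$ carried around by the shift — rather than a degenerate segment $p^{(0)}\to p^{(0)}$. Once closedness $p^{(n)}=p^{(0)}$ is used to identify this wrapped-around segment with the first segment of $\gamma$, so that the conjugating factor in the statement is literally $H_0$, the remainder of the argument is the purely formal manipulation displayed above.
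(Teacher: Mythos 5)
Your proof is correct: the paper states this lemma without proof, since it is an immediate consequence of the definition of $\MI_\gamma\nabla$ as an ordered product of segment holonomies, and your argument --- rotating the factors and cancelling the rightmost factor $H_0$ against $g^{-1}$ under conjugation --- is exactly that intended argument. Your bookkeeping remark is also the right reading of the definition: the formula $q^{(i)} = p^{(i+1\pmod{n+1})}$ must be interpreted so that the wrapped-around last segment of $\tau\gamma$ is $p^{(0)}\to p^{(1)}$ (carrying the factor $H_0$, with direction index $j_0$) rather than a degenerate segment, which is what makes the conjugating element in the statement literally equal to $H_0$.
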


\section{Two-dimensional holonomy}\label{section: two dim hol}

\subsection{Connection-curvature pairs}\label{subsection: Connection-curvature pairs}
Data of a \emph{connection-curvature pair} on $\mathbb{A}^k_X$ consists, by definition, of
\begin{itemize}
\item a nilpotent crossed module\footnote{A crossed module is the same thing as a dgla concentrated in degrees 0 and -1. A \emph{nilpotent} crossed module is one which corresponds to a nilpotent dgla.} in $\mathcal{O}_X$-Lie algebras $\mathfrak{h} \xrightarrow{\delta} \mathfrak{g} \xrightarrow{\rho} \Der(\mathfrak{h})$

\item $\mathrm{A} \in \Gamma(\mathbb{A}^k_X; \Omega^1_{\mathbb{A}^k_X/X}\otimes\mathfrak{g})$

\item $\beta \in \Gamma(\mathbb{A}^k_X; \Omega^2_{\mathbb{A}^k_X/X}\otimes\mathfrak{h})$
\end{itemize}
Let $\nabla := \mathtt{d}_{/X} + \mathrm{A}$, $F = F(\nabla) := \mathtt{d}_{/X}\mathrm{A} + \dfrac12 [\mathrm{A},\mathrm{A}]$.

These data are required to satisfy
\begin{itemize}
\item $\rho(F) = \ad(\beta)$,
\item the Bianchi identity $\nabla\beta = 0$.
\end{itemize}

\subsection{Two-dimensional holonomy}\label{subsection: two dim hol}
For a connection-curvature pair $(\nabla, \beta)$ on $\mathbb{A}^2_X$ we denote by
\[
\MI_{x_2}^{y_2}\MI_{x_1}^{y_1} (\nabla,\beta) \in \exp(\mathfrak{h})(\mathbb{A}^2_X\times_X\mathbb{A}^2_X)
\]
the solution of the initial value problem
\[
\begin{cases}
\displaystyle{\nabla_{\frac{\partial~}{\partial y_2}}\log(u) = \int\limits_{x_1}^{y_1}(\MI_{y_1}^y \nabla^\mathfrak{h})^{-1}(\iota_{\frac{\partial~}{\partial y_2}}\beta)} \\
u\vert_{\Delta_1} = 1
\end{cases}
\]
where $\Delta_1$ is given by the equation $x_1 = y_1$.

For a connection-curvature pair $(\nabla, \beta)$ on $\mathbb{A}^k_X$ and
\begin{itemize}
\item points $(x_1,\ldots,x_k), (y_1,\ldots, y_k) \in \mathbb{A}^k_X(X)$;

\item indices $1\leqslant j_1, j_2 \leqslant k$ such that $x_i = y_i$ for $i\neq j_1$ and $i\neq j_2$
\end{itemize}
let
\[
\MI_{x_{j_2}}^{y_{j_2}}\MI_{x_{j_1}}^{y_{j_1}} (\nabla,\beta) := \MI_{x_{j_2}}^{y_{j_2}}\MI_{x_{j_1}}^{y_{j_1}} (\overline{\nabla}^{(j_1,j_2)},\overline{\beta}^{(j_1,j_2)})
\]
where $Y = \mathbb{A}^{k-2}_X$ and $\mathbb{A}^k_X \to Y$ is the projection along the $j_1^{\text{th}}$ and $j_2^{\text{th}}$ coordinates, $\overline{\nabla}^{(j_1,j_2)} = \mathtt{d}_{/Y} + \overline{\mathrm{A}}^{(j_1,j_2)}$, $\overline{\mathrm{A}}^{(j_1,j_2)} \in \Gamma(\mathbb{A}^2_Y; \Omega^1_{\mathbb{A}^k_Y/Y}\otimes\mathfrak{g})$ is the image of $\mathrm{A}$, $\overline{\beta}^{(j_1,j_2)} \in \Gamma(\mathbb{A}^2_Y; \Omega^1_{\mathbb{A}^k_Y/Y}\otimes\mathfrak{h})$ is the image of $\beta$.

\subsection{Rectangles}
A \emph{rectangle} $R$ in $\mathbb{A}^2_X$ is a sequence of points $(p^{(i)})_{i=0}^3$, $p^{(i)} = (p^{(i)}_1,p^{(i)}_2) \in \mathbb{A}^2_X(X)$, such that
\[
p^{(i)}_1 = p^{(i+1)}_1 \Leftrightarrow p^{(i+1)}_2 = p^{(i+2)}_2
\]
for all $i \pmod 4$. Rectangles are in one-to-one correspondence with pairs of points in $\mathbb{A}^2_X(X)$. The pair $((x_1,x_2), (y_1,y_2))$ determines the rectangle $\left((x_1,x_2),(y_1,x_2)(y_1,y_2)(x_1,y_2)\right)$.

For a rectangle $R = \left((x_1,x_2),(y_1,x_2)(y_1,y_2)(x_1,y_2)\right)$ the closed special broken line of length four
\[
\partial R := \left((x_1,x_2),(y_1,x_2)(y_1,y_2)(x_1,y_2),(x_1,x_2)\right) ,
\]
will be called the \emph{boundary of $R$}.

For a rectangle $R = \left((x_1,x_2),(y_1,x_2)(y_1,y_2)(x_1,y_2)\right)$ in $\mathbb{A}^2_X$ let
\[
\MI_R(\nabla,\beta) := \MI_{x_2}^{y_2}\MI_{x_1}^{y_1} (\nabla,\beta) .
\]

\subsection{Chains of rectangles}
A \emph{chain of rectangles} $S = (\gamma,x,y)$ in $\mathbb{A}^{k+1}_X$ is given by the following data:
\begin{itemize}
\item A special broken line $\gamma = (p^{(i)})_{i=0}^n$, $p^{(i)} = (p^{(i)}_1,\ldots, p^{(i)}_k)$
in $\mathbb{A}^k_{\mathbb{A}^1_X}$.
\item  Points $x,y \in \mathbb{A}^1_X(X)$. 
\end{itemize}
The chain of rectangles $S$ consists of rectangles  
\[R_i:= \left( (x, p^{(i-1)}_{j_{i-1}}), (y, p^{(i-1)}_{j_{i-1}}), (y, p^{(i)}_{j_{i-1}}), (x, p^{(i)}_{j_{i-1}})\right),
\]
$1\leqslant i \leqslant n$, in the notations of Definition \ref{def: special broken line}.
The
\emph{boundary} of $S$, denoted $\partial S$ is the closed special broken line
\[
\partial S := ((p^{(0)},x),(p^{(0)},y),(p^{(1)},y),\ldots,(p^{(n)},y),(p^{(n)},x),\ldots,(p^{(0)},x))
\]
in $\mathbb{A}^{k+1}_X$.

For $1\leqslant i \leqslant n$ let $\gamma_{\leqslant i} = (p^{(l)})_{l=0}^i$, $S_{\leqslant i} = (\gamma_{\leqslant i},x,y)$. Note that $\gamma_{\leqslant n} = \gamma$ and $S_{\leqslant n} = S$. 
Define
\[
\MI_{S_{\leqslant 1}}(\nabla,\beta) := \MI_{R_1} (\nabla,\beta)
\]
and for $2\leqslant i \leqslant n$ let
\[
\MI_{S_{\leqslant i}}(\nabla,\beta) := \Conj\left(\MI_{\gamma_{\leqslant i-1}}\nabla\right)\left(\MI_{R_i} (\nabla,\beta)\right)\cdot\MI_{S_{\leqslant i-1}}(\nabla,\beta)
\]

\subsection{Green's theorem for chains of rectangles}

\begin{thm}[Green's Theorem]
Suppose that $S$ is a chain of rectangles in $\mathbb{A}^{k+1}_X$ and $(\nabla, \beta)$ is a connection-curvature pair as in \ref{subsection: Connection-curvature pairs}. Then,
\[
\MI_{\partial S}\nabla = \exp(\delta)(\MI_S(\nabla,\beta)) .
\]
\end{thm}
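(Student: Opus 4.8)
The plan is to prove Green's theorem by induction on the length $n$ of the special broken line $\gamma$ underlying the chain of rectangles $S = (\gamma, x, y)$, exploiting the recursive definition of $\MI_S(\nabla,\beta)$ together with the concatenation and conjugation identities for one-dimensional holonomy established in the earlier lemmas. The base case $n = 1$ reduces to a single rectangle $R_1$, so I would first prove the statement for a single rectangle: that $\MI_{\partial R}\nabla = \exp(\delta)\left(\MI_R(\nabla,\beta)\right)$. This is the geometric heart of the theorem and I expect it to be the main obstacle.

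For the single-rectangle case, the strategy is to compare both sides as functions of the upper limit $y_2$, viewing them as solutions of a common initial value problem in the $y_2$-direction (holding $x_1, y_1, x_2$ fixed). At $y_2 = x_2$ the rectangle degenerates: the two vertical edges coincide and cancel, $\partial R$ collapses so that $\MI_{\partial R}\nabla = 1$, while $\MI_R(\nabla,\beta) = 1$ by the initial condition $u\vert_{\Delta_1} = 1$ in the definition of two-dimensional holonomy; since $\exp(\delta)(1) = 1$, the initial conditions match. It then remains to show both sides satisfy the same differential equation $\nabla^{\mathfrak{g}}_{\partial/\partial y_2}$. For the left-hand side I would compute the logarithmic derivative of the holonomy around $\partial R$ using Lemma~\ref{identities} (concatenation) to write $\MI_{\partial R}\nabla$ as a product of the four edge holonomies, then apply the parametrized-derivative formula \eqref{der hol param}, which expresses $\nabla_{\partial/\partial y_2}\log\left(\MI_{x_1}^{y_1}\nabla\right)$ as an integral of $\iota_{\partial/\partial y_2}F$ along the path. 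The crossed-module relation $\rho(F) = \ad(\beta)$ is exactly what converts this $\mathfrak{g}$-valued curvature integral into the $\mathfrak{h}$-valued expression $\int_{x_1}^{y_1}(\MI_{y_1}^y\nabla^{\mathfrak{h}})^{-1}(\iota_{\partial/\partial y_2}\beta)$ appearing in the definition of $\MI_R(\nabla,\beta)$, after pushing forward along $\delta$ and using the compatibility $\delta\circ\rho = \ad$ together with the Bianchi identity $\nabla\beta = 0$ to control boundary terms.

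For the inductive step, I would assume the theorem for $S_{\leqslant i-1}$ and compute $\MI_{\partial S_{\leqslant i}}\nabla$. The key geometric observation is that the boundary $\partial S_{\leqslant i}$ differs from the concatenation of $\partial R_i$ with $\partial S_{\leqslant i-1}$ only along an internal edge traversed once in each direction, so these cancel; the two boundaries are joined by conjugation by the holonomy $\MI_{\gamma_{\leqslant i-1}}\nabla$ along the portion of $\gamma$ already traversed. This matches precisely the recursive definition $\MI_{S_{\leqslant i}}(\nabla,\beta) = \Conj\left(\MI_{\gamma_{\leqslant i-1}}\nabla\right)\left(\MI_{R_i}(\nabla,\beta)\right)\cdot\MI_{S_{\leqslant i-1}}(\nabla,\beta)$. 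Applying $\exp(\delta)$, which is a group homomorphism, and using that $\exp(\delta)$ intertwines the conjugation action of $\exp(\mathfrak{g})$ on $\exp(\mathfrak{h})$ (via $\rho$) with the adjoint action on $\exp(\mathfrak{g})$ — a standard crossed-module equivariance — converts the $\mathfrak{h}$-side conjugation into the $\mathfrak{g}$-side conjugation by $\MI_{\gamma_{\leqslant i-1}}\nabla$, so that the single-rectangle case and the induction hypothesis assemble into the claimed identity.

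The most delicate point, and the one I would treat with the greatest care, is the single-rectangle case: specifically, verifying that the differential equation satisfied by $\log\MI_{\partial R}\nabla$ in the $y_2$-direction agrees with $\delta$ applied to the defining equation of $\MI_R(\nabla,\beta)$. This requires the interplay of three structural facts — $\rho(F) = \ad(\beta)$, the Bianchi identity, and the crossed-module Peiffer-type compatibility $\delta\rho = \ad$ — and it is here that the hypotheses of the connection-curvature pair are genuinely used. The surrounding arguments (degeneration of boundaries, cancellation of internal edges, homomorphism property of $\exp(\delta)$) are essentially formal once this is in hand.
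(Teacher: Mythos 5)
First, a point of calibration: the paper states Green's Theorem with no proof at all --- this is an announcement-style note that explicitly omits technical details --- so your proposal cannot be measured against ``the paper's own proof''; it can only be judged on its own terms. Your overall architecture is the natural one and is consistent with the single proof technique the paper does exhibit (the lemma establishing \eqref{der hol param}): induct on the length of the chain, using the recursive definition of $\MI_{S_{\leqslant i}}(\nabla,\beta)$, the concatenation and inversion lemmas, cancellation of interior vertical edges, and the group-level equivariance $\exp(\delta)\bigl({}^{g}h\bigr)=\Conj(g)\bigl(\exp(\delta)(h)\bigr)$; then settle the single rectangle by exhibiting both sides as solutions of one initial value problem. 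The inductive step as you describe it is correct and essentially formal.

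The genuine gap is in the base case, at exactly the step you yourself flag as delicate. Formula \eqref{der hol param} produces the $\mathfrak{g}$-valued integrand $(\MI_{y_1}^{t}\nabla^{\mathfrak{g}})^{-1}(\iota_{\partial/\partial y_2}F)$, whereas $\delta$ applied to the defining equation of $\MI_R(\nabla,\beta)$ produces, by equivariance of $\delta$ (from $\delta(\rho(x)h)=[x,\delta h]$ one gets $\delta\bigl((\MI_{y_1}^{t}\nabla^{\mathfrak{h}})^{-1}(\iota\beta)\bigr)=(\MI_{y_1}^{t}\nabla^{\mathfrak{g}})^{-1}(\iota\,\delta\beta)$), the same expression with $\delta\beta$ in place of $F$. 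Matching the two therefore requires the identity $F=\delta(\beta)$ in $\Omega^2\otimes\mathfrak{g}$. This does \emph{not} follow from the hypothesis $\rho(F)=\ad(\beta)$: by the Peiffer identity $\ad(\beta)=\rho(\delta\beta)$, so the hypothesis only says that $F-\delta\beta$ takes values in $\ker\rho$, and no appeal to the equivariance axiom can recover $F$ from $\rho(F)$ when $\rho$ is not injective. Indeed, read literally the stated hypotheses do not imply the theorem: take $\mathfrak{h}=0$ and $\mathfrak{g}$ abelian, so that $\rho(F)=0=\ad(\beta)$ and the Bianchi identity hold for any $\mathrm{A}$; the claim then becomes $\MI_{\partial R}\nabla=1$, i.e.\ $\oint_{\partial R}\mathrm{A}=0$, for every rectangle, which fails whenever $F\neq 0$. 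So a correct proof must invoke the vanishing-fake-curvature identity $F=\delta(\beta)$ outright (this is what actually holds in the intended application, where $\nabla$ and $\beta$ arise from components of a Maurer--Cartan element); it cannot be manufactured from $\rho(F)=\ad(\beta)$ and $\delta\circ\rho=\ad$, as your sketch asserts. Your proposal thus conceals the one point where the hypotheses are genuinely consumed.

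Two smaller corrections. The initial condition in the definition of two-dimensional holonomy lives on $\Delta_1=\{x_1=y_1\}$ (degenerate width), not on $\{x_2=y_2\}$, so your claim that $\MI_R(\nabla,\beta)=1$ at $y_2=x_2$ ``by the initial condition'' misreads the definition; you must either run the uniqueness argument in the variable $y_1$ (where both sides are visibly $1$ at $y_1=x_1$: the horizontal edge holonomies degenerate and the vertical ones cancel, by Lemma~\ref{identities}), or first argue that the two-dimensional holonomy is also trivial on $\{x_2=y_2\}$ --- neither is automatic from the text as printed. Finally, once $F=\delta(\beta)$ is granted, the $y_2$-comparison for a single rectangle needs no Bianchi identity; $\nabla\beta=0$ becomes essential only in the three-dimensional Gauss--Ostrogradsky statement, so its appearance in your base case is a red herring.
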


\subsection{Parallelepipeds}
A parallelepiped is a pair of points in $\mathbb{A}^3_X$.

In what follows we use that first coordinate projection to identify $\mathbb{A}^3_X$ with $\mathbb{A}^2_{\mathbb{A}^1_X}$. A parallelepiped $Q = ((x_1,x_2,x_3),(y_1,y_2,y_3))\in \mathbb{A}^3_X\times_X\mathbb{A}^3_X(X)$ gives rise to the rectangle $R = ((x_2,x_3),(y_2,x_3)(y_2,y_3)(x_2,y_3))$ in $\mathbb{A}^2_{\mathbb{A}^1_X}$ and the chain of rectangles $S = (\partial R, x_1,y_1)$.

\begin{thm}[Gauss-Ostrogradsky Theorem]\label{thm: gauss on brick}
Suppose that $Q$ is a parallelepiped in $\mathbb{A}^3_X$ and $(\nabla, \beta)$ is a connection-curvature pair as in \ref{subsection: Connection-curvature pairs}.
\[
\MI_S(\nabla,\beta) = \Conj(\left.\MI_{x_1}^{y_1}\nabla\right\vert_{(x_2,x_3)})\left(\left.\MI_R(\nabla,\beta)\right\vert_{y_1}\right)\cdot \left(\left.\MI_R(\nabla,\beta)\right\vert_{x_1}\right)^{-1}
\]
\end{thm}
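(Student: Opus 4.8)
The plan is to fix every coordinate except $y_1$ and to show that, as functions of $y_1$, the two sides of the asserted identity satisfy one and the same initial value problem $\nabla_{\frac{\partial~}{\partial y_1}}\log(u) = \Phi$, $u\vert_{y_1 = x_1} = 1$, in the unknown $u \in \exp(\mathfrak{h})$; uniqueness of solutions of such problems (Section \ref{section: calculus}) then forces equality. First I would check the initial conditions. At $y_1 = x_1$ each rectangle $R_i$ of the chain $S$ degenerates to a segment, so the inner integral in the defining initial value problem of Subsection \ref{subsection: two dim hol} vanishes and $\MI_{R_i}(\nabla,\beta) = 1$; hence $\MI_S(\nabla,\beta)\vert_{y_1=x_1} = 1$. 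On the right-hand side $\MI_{x_1}^{x_1}\nabla = 1$ by Lemma \ref{identities}, so the expression reduces to $\left(\MI_R(\nabla,\beta)\vert_{x_1}\right)\cdot\left(\MI_R(\nabla,\beta)\vert_{x_1}\right)^{-1} = 1$. Thus the two sides share the initial condition, and it remains to match their logarithmic $y_1$-derivatives.

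To differentiate the right-hand side, write $a := \MI_{x_1}^{y_1}\nabla\vert_{(x_2,x_3)}$ and note that the last factor $\left(\MI_R(\nabla,\beta)\vert_{x_1}\right)^{-1}$ is independent of $y_1$. By the very definition of holonomy one has $\nabla_{\frac{\partial~}{\partial y_1}}\log(a) = 0$, so feeding this into the logarithmic-derivative identities for products, inverses and conjugation (extended to the crossed-module action of $\exp(\mathfrak{g})$ on $\exp(\mathfrak{h})$) collapses every term that would pass through $a$ and leaves
\[
\nabla_{\frac{\partial~}{\partial y_1}}\log(\text{RHS}) = \Ad_a\left(\nabla_{\frac{\partial~}{\partial y_1}}\log\left(\MI_R(\nabla,\beta)\vert_{y_1}\right)\right) .
\]
The surviving quantity is the \emph{transverse} variation, with respect to the first coordinate, of the rectangle $2$-holonomy; I would compute it by differentiating the two nested initial value problems defining $\MI_R(\nabla,\beta)$, obtaining a two-dimensional refinement of \eqref{der hol param}. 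This variation splits into a bulk term, namely the integral over $R$ of $\iota_{\frac{\partial~}{\partial y_1}}(\nabla\beta)$ suitably transported, and boundary contributions from the four edges of $R$. The Bianchi identity $\nabla\beta = 0$ annihilates the bulk term, leaving only the transported edge-integral of $\iota_{\frac{\partial~}{\partial y_1}}\beta$ around $\partial R$.

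For the left-hand side I would differentiate the recursive definition of $\MI_{S_{\leqslant i}}(\nabla,\beta)$. The conjugating factors $\MI_{\gamma_{\leqslant i-1}}\nabla$ are one-dimensional holonomies taken at the base value $x_1$ of the sweep coordinate and are therefore independent of $y_1$, so the product and conjugation rules express $\nabla_{\frac{\partial~}{\partial y_1}}\log\MI_S(\nabla,\beta)$ as a sum over the four side faces $R_i$ of the derivatives $\nabla_{\frac{\partial~}{\partial y_1}}\log\MI_{R_i}(\nabla,\beta)$, each $\Ad$-transported by the partial holonomy $\MI_{\gamma_{\leqslant i-1}}\nabla$. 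Since $y_1$ is the far endpoint of the sweep, differentiating $\MI_{R_i}$ in $y_1$ adds the $\beta$-flux through the infinitesimal strip at the edge $p^{(i-1)}\to p^{(i)}$ lying over $x_1 = y_1$; by the defining initial value problem this is the transported edge-integral of $\iota_{\frac{\partial~}{\partial y_1}}\beta$. The conjugations by $\MI_{\gamma_{\leqslant i-1}}\nabla$ refer all four edge-integrals to the common base point $(x_2,x_3)$, so the sum telescopes into the transported integral of $\iota_{\frac{\partial~}{\partial y_1}}\beta$ around the whole boundary $\partial R$.

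Comparing the two computations, both logarithmic $y_1$-derivatives reduce to the same transported boundary integral $\int_{\partial R}\iota_{\frac{\partial~}{\partial y_1}}\beta$ — on the right via the Stokes/Bianchi reduction of the transverse variation of $\MI_R$, on the left via the telescoping chain recursion — whence they agree and the theorem follows by integrating from $y_1 = x_1$. I expect the principal obstacle to be the clean derivation of the transverse-variation formula for the rectangle $2$-holonomy, i.e.\ the two-dimensional refinement of \eqref{der hol param}: this requires differentiating nested holonomy integrals and carefully separating the bulk curvature term (which Bianchi removes) from the edge terms, all while keeping the $\Ad$-transports by the one-dimensional $\mathfrak{g}$- and $\mathfrak{h}$-holonomies mutually consistent. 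The crux is that the left-hand edge-integrals are produced at the top level $y_1$ but carry the bottom-level transports $\MI_{\gamma_{\leqslant i-1}}\nabla$, whereas the right-hand side carries the factor $\Ad_a$; reconciling these costs exactly the transverse curvature $\iota_{\frac{\partial~}{\partial y_1}}F$, and it is the structural axiom $\rho(F) = \ad(\beta)$, together with $\nabla\beta = 0$, that makes the $\exp(\mathfrak{g})$-transports of the chain compatible with the $\exp(\mathfrak{h})$-transports inside $\MI_R(\nabla,\beta)$ and forces the residual terms to cancel.
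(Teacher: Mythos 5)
The paper states Theorem \ref{thm: gauss on brick} without proof (the note explicitly omits technical details), so there is no proof of record to compare yours against; I can only assess the argument on its own terms. Your overall strategy --- fix every coordinate but $y_1$, show both sides solve the same initial value problem $\nabla_{\frac{\partial~}{\partial y_1}}\log(u) = \Phi$, $u\vert_{y_1=x_1}=1$, and invoke uniqueness --- is sound, and it is exactly the technique of the paper's one written-out proof (of the lemma giving \eqref{der hol param}). Your verification of the initial conditions is correct, and the abelian shadow of your derivative computation (Lie derivative of the flux through $R$ equals a bulk $\nabla\beta$ term plus a boundary term, with the Bianchi identity killing the bulk) is the right picture.

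The gap is in the treatment of the left-hand side. You claim that the $y_1$-derivative of each side-face holonomy $\MI_{R_i}(\nabla,\beta)$ is given ``by the defining initial value problem'' as the transported edge integral of $\iota_{\frac{\partial~}{\partial y_1}}\beta$. It is not. For the rectangles of the chain $S = (\partial R, x_1, y_1)$ one has $\MI_{R_i}(\nabla,\beta) = \MI_{p^{(i-1)}_{j}}^{p^{(i)}_{j}}\MI_{x_1}^{y_1}(\nabla,\beta)$: the sweep coordinate $y_1$ is the endpoint of the \emph{inner} integration, whereas the defining initial value problem of \ref{subsection: two dim hol} prescribes the logarithmic derivative only in the \emph{outer} (broken-line) direction. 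Differentiating in $y_1$ therefore means differentiating the solution of that ODE with respect to a parameter --- i.e., exactly the ``two-dimensional refinement of \eqref{der hol param}'' (a variation formula for two-dimensional holonomy) that you yourself identify as the principal obstacle on the right-hand side, and which you never establish. So the one genuinely new analytic ingredient of the theorem is needed on \emph{both} sides and is supplied on neither. In addition, the product, inverse and conjugation rules for covariant logarithmic derivatives carry curvature correction terms (cf.\ \eqref{dlog commutator} and \eqref{commutator curvature}), so the claimed collapse of the right-hand side to $\Ad_a$ of the derivative of $\MI_R\vert_{y_1}$, and the claimed telescoping on the left, both require these corrections to be tracked and cancelled against the structural identities $\rho(F)=\ad(\beta)$ and $\nabla\beta = 0$ --- which your final paragraph gestures at but does not carry out. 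Until the variation formula is stated and proved, the proposal is a plan for a proof rather than a proof.
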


\subsection{Parametrization of simplexes}
Recall that the $n$-simplex $\Delta^n$ is defined as the hyperplane in $\mathbb{A}^{n+1}$ given by the equation $t_0+\dots +t_n = 1$.

We parameterize $\Delta^1$, $\Delta^2$ and $\Delta^3$, respectively, by
\begin{enumerate}
\item[$\Delta^1$:] $t_0 = 1-x$, $t_1 = x$;

\item[$\Delta^2$:] $t_0 = 1-x_1$, $t_1 = x_1(1-x_2)$, $t_2 = x_1x_2$;

\item[$\Delta^3$:] $t_0 = 1-x_1$,$t_1 = x_1(1-x_2)$, $t_2 = x_1x_2(1-x_3)$, $t_3 = x_1x_2x_3$ .
\end{enumerate}
Let $\Phi^i$ denote the parameterization of $\Delta^i$, $i=1,2,3$.

\[
\MI_{\Delta^1}\nabla := \MI_0^1\Phi^{1*}\nabla
\]

\[
\MI_{\Delta^2}(\nabla,\beta) := \MI_0^1\MI_0^1(\Phi^{2*}\nabla,\Phi^{2*}\beta)
\]

With these notations, for a connection-curvature pair $(\nabla,\beta)$ on $\mathbb{A}^3_X$  Gauss-Ostrogradsky Theorem \ref{thm: gauss on brick} says
\begin{multline}\label{gauss for simplex}
\Conj(\MI_{\Delta^1} f^*\nabla)\left(\MI_{\Delta^2}\partial_0^*(\nabla,\beta) \right) = \\
\left(\MI_{\Delta^2}\partial_2^*(\nabla,\beta)\right)^{-1}\cdot\left(\MI_{\Delta^2}\partial_1^*(\nabla,\beta)\right)\cdot\left(\MI_{\Delta^2}\partial_3^*(\nabla,\beta)\right),
\end{multline}
where $f = \partial_3\circ\partial_2 = \partial_2\circ\partial_2$ is the map $\Delta^1 \to \Delta^3$ induced by $0 \mapsto 0$, $1 \mapsto 1$.

\section{From $\Sigma$ to $\mathfrak{N}\MC^2$}\label{section: integral}

Throughout this section we assume that $\mathfrak{g}$ is a nilpotent DGLA which satisfies $\mathfrak{g}^i = 0$ for $i \leqslant -2$. Recall that in \cite{BGNT0} we showed that the simplicial nerve $\mathfrak{N}\MC^2(\mathfrak{g})$ of the $2$-groupoid $\MC^2(\mathfrak{g})$ is weakly equivalent to another simplicial set $\Sigma(\mathfrak{g})$.
In this section we apply the theory of multiplicative integration outlined above to construct the map
\[
\mathbb{I}(\mathfrak{g}) \colon \Sigma(\mathfrak{g}) \to \mathfrak{N}\MC^2(\mathfrak{g})
\]
omitting technical details. We will rely heavily on results and notations from \cite{BGNT0}.

In what follows we denote by $\Omega_n$, $n = 0, 1, 2, \ldots$ the commutative differential graded algebra over $\mathbb{Q}$ with generators $t_0,\ldots,t_n$ of degree zero and $dt_0, \ldots, dt_n$ of degree one subject to the relations $t_0 + \cdots + t_n = 1$ and $dt_0 + \cdots + dt_n = 0$. The differential $d \colon \Omega_n \to \Omega_n[1]$ is defined by $t_i \mapsto dt_i$ and $dt_i \mapsto 0$. The assignment $[n] \mapsto \Omega_n$ extends in a natural way to a simplicial commutative differential graded algebra.

Recall that the set of Maurer-Cartan elements of $\mathfrak{g}$, denoted $\MC(\mathfrak{g})$ is defined as
\[
\MC(\mathfrak{g}) := \left\{\gamma\in\mathfrak{g}^1 \mid \delta\gamma + \frac12[\gamma,\gamma] = 0\right\} .
\]

\subsection{The simplicial set $\Sigma(\mathfrak{g})$}
For a nilpotent $L_\infty$-algebra $\mathfrak{g}$ and a non-negative integer $n$ let
\[
\Sigma_n(\mathfrak{g}) =\MC(\mathfrak{g}\otimes\Omega_n) .
\]
Equipped with structure maps induced by those of $\Omega_\bullet$ the assignment $n \mapsto \Sigma_n(\mathfrak{g})$ defines a simplicial set denoted $\Sigma(\mathfrak{g})$.

The simplicial set $\Sigma(\mathfrak{g})$ was introduced by V.~Hinich in \cite{H1} for DGLA
and used by E.~Getzler in \cite{G1} (where it is denoted $\MC_\bullet(\mathfrak{g})$) for
general nilpotent $L_\infty$-algebras.

\subsection{The Deligne 2-groupoid}\label{ss: Deligne}
We denote by $\MC^2(\mathfrak{g})$ the Deligne $2$-groupoid as
defined by P.~Deligne \cite{Del} and independently by E.~Getzler, \cite{G1}. Below we review the construction of Deligne $2$-groupoid of a nilpotent DGLA following \cite{G1, G2} and references therein.

Since $\mathfrak{g}$ is a nilpotent DGLA it follows that $\mathfrak{g}^0$ is a nilpotent Lie algebra. The unipotent group $\exp \mathfrak{g}^0$ acts on the space $\mathfrak{g}^1$ by affine transformations. The action of $\exp X$, $X\in\mathfrak{g}^0$, on $\gamma\in\mathfrak{g}^1$ is given by the formula
\begin{equation}\label{gauge transformation}
(\exp X) \cdot \gamma= \gamma- \sum_{i=0}^{\infty} \frac{(\ad
X)^i}{(i+1)!}(\delta X + [\gamma, X]) .
\end{equation}
The action of gauge transformations \eqref{gauge transformation} preserves the subset of Maurer-Cartan elements $\MC(\mathfrak{g})\subset\mathfrak{g}^1$.

We denote by $\MC^1(\mathfrak{g})$ the Deligne groupoid defined as the groupoid associated with the action of the group $\exp \mathfrak{g}^0$ by gauge
transformations on the set $\MC(\mathfrak{g})$.
%

We denote by $\MC^2(\mathfrak{g})$ the Deligne $2$-groupoid whose objects and the $1$-morphisms of $\MC^2(\mathfrak{g})$ are those of $\MC^1(\mathfrak{g})$. The 2-morphisms of $\MC^2(\mathfrak{h})$are defined as follows.

For $\gamma \in \MC(\mathfrak{g})$ let $[\cdot, \cdot]_{\gamma}$ denote the Lie bracket on $\mathfrak{g}^{-1}$ defined by
\begin{equation}\label{eq:mu-bracket}
[a,\,b]_{\gamma}=[a,\, \delta b+[\gamma, \,b]].
\end{equation}
Equipped with this bracket, $\mathfrak{g}^{-1}$ becomes a nilpotent Lie
algebra. We denote by $\exp_{\gamma} \mathfrak{g}^{-1}$ the
corresponding unipotent group, and by
\[
\exp_{\gamma} \colon \mathfrak{g}^{-1} \to \exp_{\gamma}\mathfrak{g}^{-1}
\]
the corresponding exponential map. If $\gamma_1$, $\gamma_2$ are two Maurer-Cartan
elements, then the group $\exp_{\gamma_2} \mathfrak{g}^{-1}$ acts on
$\Hom_{\MC^1(\mathfrak{g})}(\gamma_1, \gamma_2)$. For $\exp_{\gamma_2} t
\in \exp_{\gamma_2} \mathfrak{g}^{-1}$ and $\Hom_{\MC^1(\mathfrak{g})}(\gamma_1, \gamma_2)$ the action is given by
\begin{equation*}
(\exp_{\gamma_2} t) \cdot (\exp X) =
\exp(\delta t+[\gamma_2,t])\, \exp X \in \exp
\mathfrak{g}^0 .
\end{equation*}
By definition, $\Hom_{\MC^2(\mathfrak{g})}(\gamma_1, \gamma_2)$ is the groupoid associated with the above action.

\subsection{The simplicial nerve of the Deligne 2-groupoid}
We will need the following explicit description of the simplicial nerve of $\MC^2(\mathfrak{g})$. See \cite{BGNT0} for details.

\begin{lemma}\label{lemma:the two gothic Ns}
The simplicial nerve $\mathfrak{N}\MC^2(\mathfrak{g})$ admits the following explicit
description:
\begin{enumerate}
\item $\mathfrak{N}_0\MC^2(\mathfrak{g}) = \MC(\mathfrak{g})$

\item For $n\geq 1$ there is a canonical bijection between $\mathfrak{N}_n\MC^2(\mathfrak{g})$ and the set of data of the form
\[
((\mu_i)_{0\leq i\leq n}, (g_{ij})_{0\leq i < j\leq n}, (c_{ijk})_{0\leq i < j<k\leq n}) ,
\]
where
\begin{itemize}
\item $(\mu_i)$ is an $(n+1)$-tuple of objects of Maurer-Cartan elements of $\mathfrak{g}$,
\item $(g_{ij})$ is a collection of $1$-morphisms (gauge transformations) $g_{ij}\colon\mu_j\to \mu_i$
\item $(c_{ijk})$ is a collection of $2$-morphisms $c_{ijk}:g_{ij}g_{jk}\to g_{ik}$ which satisfies
\begin{equation} \label{eq:cece}
c_{ijl}c_{jkl}=c_{ikl}c_{ijk}
\end{equation}
(in the set of $2$-morphisms $g_{ij}g_{jk}g_{kl}\to g_{il}$).
\end{itemize}

\end{enumerate}

For a morphism $f\colon [m] \to [n]$ in $\Delta$ the induced structure map
$f^*\colon\mathfrak{N}_n\MC^2(\mathfrak{g}) \to \mathfrak{N}_m\MC^2(\mathfrak{g})$ is given (under the above bijection) by $f^*((\mu_i),(g_{ij}),(c_{ijk})) = ((\nu_i),(h_{ij}),(d_{ijk}))$,
where $\nu_i = \mu_{f(i)}$, $h_{ij} = g_{f(i),f(j)}$, $d_{ijk} = c_{f(i),f(j),f(k)}$.
\end{lemma}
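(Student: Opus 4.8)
The plan is to unwind the definition of the simplicial nerve $\mathfrak{N}$ used in \cite{BGNT0}, namely the geometric (Duskin) nerve of a strict $2$-groupoid, and to match its $n$-simplices termwise with the data in the statement. Recall that for a strict $2$-category $\mathcal{C}$ the set $\mathfrak{N}_n\mathcal{C}$ is, by definition, the set of normal lax $2$-functors from the $2$-category associated to the ordinal $[n] = \{0\leqslant\cdots\leqslant n\}$ (a single $1$-morphism between any two comparable objects and only identity $2$-morphisms) into $\mathcal{C}$. First I would record what such a functor $F$ amounts to: an object $\mu_i$ for each vertex $i$, a $1$-morphism $g_{ij}\colon\mu_j\to\mu_i$ for each edge $i<j$, and, for each $2$-face $i<j<k$, a structural $2$-morphism $c_{ijk}\colon g_{ij}g_{jk}\to g_{ik}$ encoding the laxity of $F$. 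Normality fixes $g_{ii}=\id$ and forces every comparison $2$-cell carrying a repeated index to be an identity.

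The second step is to identify the coherence condition. The $2$-category $[n]$ has, for each $i<j<k<l$, a unique (identity) $3$-cell, and the requirement that $F$ respect it is precisely the equality of the two pastings of the laxity cells around the boundary of the corresponding tetrahedron: whiskering $c_{jkl}$ by $g_{ij}$ and then applying $c_{ijl}$ must agree with whiskering $c_{ijk}$ by $g_{kl}$ and then applying $c_{ikl}$. Written out as an equality of $2$-morphisms $g_{ij}g_{jk}g_{kl}\to g_{il}$ this is exactly \eqref{eq:cece}. Here I would use that $\MC^2(\mathfrak{g})$ is a $2$-\emph{groupoid}: all $1$- and $2$-morphisms are invertible, so the $g_{ij}$ are gauge transformations as in \ref{ss: Deligne} and the $c_{ijk}$ are $2$-isomorphisms, and no orientation or invertibility obstruction arises. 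For $n=0$ there are neither edges nor faces, so an $F$ is just an object, giving $\mathfrak{N}_0\MC^2(\mathfrak{g}) = \MC(\mathfrak{g})$.

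Finally I would treat the structure maps. A morphism $f\colon [m]\to [n]$ in $\Delta$ is monotone, hence a $2$-functor $[m]\to[n]$, and the simplicial structure on $\mathfrak{N}\mathcal{C}$ is by definition given by precomposition $F\mapsto F\circ f$. Reading off the components of $F\circ f$ yields $\nu_i = \mu_{f(i)}$, $h_{ij} = g_{f(i),f(j)}$ and $d_{ijk} = c_{f(i),f(j),f(k)}$, which is the asserted formula; the normalization guarantees that this prescription is well defined when $f$ is not injective, i.e.\ for the degeneracy maps, where colliding indices force the relevant $1$- and $2$-cells to be identities.

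The main obstacle I anticipate is the bookkeeping in the second step: one must fix the order of composition and the whiskering conventions so that the coherence datum of a normal lax $2$-functor collapses to \eqref{eq:cece} on the nose, rather than to a reindexed or inverse variant, and one must check that the normalization conditions are exactly those needed for the simplicial identities—especially the degeneracies—to hold. Everything else is a direct transcription of the definition of the geometric nerve, so, the explicit model already being established in \cite{BGNT0}, the argument here reduces to recalling that model and verifying these compatibilities.
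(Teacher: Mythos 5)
Your unwinding of the geometric (Duskin) nerve is, in itself, correct: normal lax $2$-functors from $[n]$ to a strict $2$-groupoid are exactly the triples $((\mu_i),(g_{ij}),(c_{ijk}))$ subject to \eqref{eq:cece}, normality accounts for the degeneracies, and precomposition along $f\colon[m]\to[n]$ gives the stated structure maps. The problem is the starting point. The nerve $\mathfrak{N}$ of this paper and of \cite{BGNT0} is not the Duskin nerve: it is the simplicial nerve in the sense of Cordier--Lurie. One regards $\MC^2(\mathfrak{g})$ as a category enriched in groupoids, applies the ordinary nerve functor to each $\Hom$-groupoid so as to obtain a category enriched in simplicial sets, and then takes the homotopy coherent nerve of that simplicial category; an $n$-simplex of $\mathfrak{N}\MC^2(\mathfrak{g})$ is by definition a simplicial functor from $\mathfrak{C}[\Delta^n]$ to this simplicial category, not a normal lax functor out of $[n]$. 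Your argument therefore establishes the explicit description for a different simplicial set, and the identification of that simplicial set with $\mathfrak{N}\MC^2(\mathfrak{g})$ is essentially the content of the lemma, so it cannot be taken for granted.

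What is missing is the analysis of the coherent nerve, which is the route taken in \cite{BGNT0}. Concretely, $\mathfrak{C}[\Delta^n]$ has hom-spaces $\mathfrak{C}[\Delta^n](i,j)=N(P_{ij})$, where $P_{ij}$ is the poset of subsets of $\{i,i+1,\dots,j\}$ containing both $i$ and $j$, with composition induced by union. Since the hom-spaces of the target are nerves of groupoids and the nerve functor is fully faithful, a simplicial functor amounts to a system of functors $P_{ij}\to\Hom_{\MC^2(\mathfrak{g})}(\mu_j,\mu_i)$ strictly compatible with unions; one then checks that such a system is determined by its values on the minimal subsets $\{i,j\}$ (yielding $g_{ij}$) and on the elementary inclusions $\{i,k\}\subseteq\{i,j,k\}$ (yielding $c_{ijk}$), and that functoriality on the commuting squares of the posets $P_{ij}$ imposes exactly the cocycle condition \eqref{eq:cece}, all other values being forced by whiskering. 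Alternatively, you could keep your Duskin-nerve computation and supply the comparison between the geometric nerve and the coherent nerve of a strict $(2,1)$-category; but that comparison must then be proved or properly cited --- it, rather than the whiskering and orientation conventions you flag as the main obstacle, is the real gap in the proposal.
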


\subsection{From $\Sigma$ to $\mathfrak{N}\MC^2$}
Recall that, for $n = 0,1,2,\ldots$, $\Sigma_n(\mathfrak{g}) = \MC(\Omega_n\otimes\mathfrak{g}) \subset \left(\Omega_n\otimes\mathfrak{g}\right)^1 = \Omega_n^0\otimes\mathfrak{g}^1 \oplus \Omega_n^1\otimes\mathfrak{g}^0 \oplus \Omega_n^2\otimes\mathfrak{g}^{-1}$. Thus, every element $\mu\in\Sigma_n(\mathfrak{g})$ is a triple: $\mu = (\mu^{0,1},\mu^{1,0},\mu^{2,-1})$.

For $n = 0,1,2,\ldots$ the map
\begin{equation}
\mathbb{I}_n(\mathfrak{g}) \colon \Sigma_n(\mathfrak{g}) \to \mathfrak{N}_n\MC^2(\mathfrak{g})
\end{equation}
is defined as follows:
\begin{enumerate}
\item[$n=0$] The map $\mathbb{I}_0(\mathfrak{g}) \colon \Sigma_0(\mathfrak{g}) = \MC(\mathfrak{g}) \to\MC(\mathfrak{g}) = \mathfrak{N}_0\MC^2(\mathfrak{g})$ is defined to be the identity.

\item[$n=1$] For $\mu = (\mu^{0,1},\mu^{1,0}) \in \Sigma_1(\mathfrak{g})$,
\[
\mathbb{I}_1(\mathfrak{g})(\mu) = ((\partial_i^*\mu^{0,1})_{i=0,}, \MI\limits_{\Delta^1}(d+\mu^{1,0})).
\]

\item[$n\geq 2$] For $\mu = (\mu^{0,1},\mu^{1,0},\mu^{2,-1}) \in \Sigma_n(\mathfrak{g})$,
\[
\mathbb{I}_n(\mathfrak{g})(\mu) = ((\mu^{0,1}_i)_{0\leqslant i \leqslant n}, (\MI\limits_{\Delta^1}(d+\mu^{1,0}_{ij})_{0\leqslant i<j \leqslant n}, (\MI\limits_{\Delta^2}(d+\mu^{1,0}_{ijk},\mu^{2,-1}_{ijk}))_{0\leqslant i<j<k \leqslant n}),
\]
where
\begin{itemize}
\item $\mu^{0,1}_i = f^*\mu^{0,1}$, where $f \colon \Delta^0 \to \Delta^n$ is the map induced by $[0] \to [n] \colon 0 \mapsto i$;

\item $\mu^{1,0}_{ij} = f^*\mu^{1,0}$, where $f \colon \Delta^1 \to \Delta^n$ is the map induced by $[1] \to [n] \colon 0 \mapsto i, 1 \mapsto j$;

\item $\mu^{2,-1}_{ijk} = f^*\mu^{2,-1}$, where $f \colon \Delta^2 \to \Delta^n$ is the map induced by $[1] \to [n] \colon 0 \mapsto i, 1 \mapsto j, 2 \mapsto k$.
\end{itemize}
\end{enumerate}

\begin{thm}\label{thm: big I}
Suppose that $\mathfrak{g}$ is a nilpotent DGLA which satisfies $\mathfrak{g}^i = 0$ for $i \leqslant -2$. The collection of maps $\mathbb{I}_n(\mathfrak{g})$, $n = 0,1,2,\ldots$, defines a morphism of simplicial sets
\begin{equation}\label{higher holonomy}
\mathbb{I}(\mathfrak{g}) \colon \Sigma(\mathfrak{g}) \to \mathfrak{N}\MC^2(\mathfrak{g})
\end{equation}
natural in $\mathfrak{g}$ which satisfies
\begin{enumerate}
\item $\mathbb{I}_0(\mathfrak{g}) \colon \Sigma_0(\mathfrak{g}) \to \mathfrak{N}_0\MC^2(\mathfrak{g})$ is the identity map after the identification $\Sigma_0(\mathfrak{g}) = \MC(\mathfrak{g}) = \mathfrak{N}_0\MC^2(\mathfrak{g})$.

\item The restriction of $\mathbb{I}$ to the subcategory of abelian algebras (a.k.a. complexes) coincides with the integration map
\begin{equation}\label{abelian integration}
\int\colon \Sigma(\mathfrak{a}) \to K(\mathfrak{a}[1]) = \mathfrak{N}\MC^2(\mathfrak{a}),
\end{equation}
$\mathfrak{a}$ abelian. Note that the integration map is a morphism of simplicial groups.

\item If $\mathfrak{a} \hookrightarrow \mathfrak{g}$ is a central subalgebra the diagram
\[
\begin{CD}
\Sigma(\mathfrak{g}) @>\mathbb{I}>> \mathfrak{N}\MC^2(\mathfrak{g}) \\
@VVV @VVV \\
\Sigma(\mathfrak{g}/\mathfrak{a})_0 @>\mathbb{I}>> \mathfrak{N}\MC^2(\mathfrak{g}/\mathfrak{a})_0
\end{CD}
\]
is a morphism of principal fibrations relative to the morphism of groups \eqref{abelian integration}, where $\Sigma(\mathfrak{g}/\mathfrak{a})_0$ (respectively, $\mathfrak{N}\MC^2(\mathfrak{g}/\mathfrak{a})_0$) denotes the image of the map $\Sigma(\mathfrak{g}) \to \Sigma(\mathfrak{g}/\mathfrak{a})$ (respectively, $\mathfrak{N}\MC^2(\mathfrak{g}) \to \mathfrak{N}\MC^2(\mathfrak{g}/\mathfrak{a})$).
\end{enumerate}
\end{thm}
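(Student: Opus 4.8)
The plan is to read off, from the Maurer--Cartan equation for $\mu\in\Sigma_n(\mathfrak{g})$, a connection--curvature pair on each face of $\Delta^n$, and then to verify that the three pieces of structure produced by $\mathbb{I}_n$ --- the vertices $\mu^{0,1}_i$, the edge holonomies $\MI_{\Delta^1}(d+\mu^{1,0}_{ij})$, and the face integrals $\MI_{\Delta^2}(d+\mu^{1,0}_{ijk},\mu^{2,-1}_{ijk})$ --- assemble into a point of $\mathfrak{N}_n\MC^2(\mathfrak{g})$ in the sense of Lemma \ref{lemma:the two gothic Ns}. First I would decompose $\mu=(\mu^{0,1},\mu^{1,0},\mu^{2,-1})$ by bidegree (form degree, internal degree). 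Writing the total differential as $d\pm\delta$ and collecting terms, the equation $D\mu+\tfrac12[\mu,\mu]=0$ splits (up to signs) into its $(0,2)$, $(1,1)$, $(2,0)$ and $(3,-1)$ components. The $(0,2)$ part says $\mu^{0,1}$ is a pointwise Maurer--Cartan element, so each vertex restriction satisfies $\mu^{0,1}_i\in\MC(\mathfrak{g})$; the $(2,0)$ part reads $F(d+\mu^{1,0})=-(\delta+[\mu^{0,1},\,\cdot\,])\mu^{2,-1}$; and the $(3,-1)$ part, using $\mathfrak{g}^{-2}=0$ so that no $3$-form term occurs, reduces to the Bianchi identity $\nabla\mu^{2,-1}=0$. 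Thus $(\nabla,\beta):=(d+\mu^{1,0},\mu^{2,-1})$ is a connection--curvature pair for the crossed module $\mathfrak{g}^{-1}\xrightarrow{\delta}\mathfrak{g}^0$ \emph{twisted by} $\mu^{0,1}$, i.e.\ with boundary $\delta+[\mu^{0,1},\,\cdot\,]$ and bracket $[\,\cdot\,,\cdot\,]_{\mu^{0,1}}$; this is exactly the crossed module whose exponential $\exp_{\mu^{0,1}}$ governs the $2$-morphisms of \S\ref{ss: Deligne}.

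With this in hand, the $(1,1)$ component $d\mu^{0,1}+(\delta+[\mu^{0,1},\,\cdot\,])\mu^{1,0}=0$ is precisely the infinitesimal form of the gauge action \eqref{gauge transformation}. Integrating it along an edge, using Lemma \ref{identities}, shows that $g_{ij}=\MI_{\Delta^1}(d+\mu^{1,0}_{ij})\in\exp\mathfrak{g}^0$ carries $\mu^{0,1}_j$ to $\mu^{0,1}_i$, so $g_{ij}$ is a $1$-morphism $\mu_j\to\mu_i$. Next, applying Green's theorem to the $2$-simplex --- realised as the chain of rectangles attached to the parametrisation $\Phi^2$ --- identifies the boundary holonomy of the triangle $ijk$ with the image under $\exp(\delta)$ of the face integral $\MI_{\Delta^2}(\nabla,\beta)$; reading the boundary as the edges $ij$, $jk$ and the reverse of $ik$, and transporting the twist along the edges so that source and target land in the correct groups $\exp_{\mu_i}\mathfrak{g}^{-1}$, exhibits $c_{ijk}:=\MI_{\Delta^2}(d+\mu^{1,0}_{ijk},\mu^{2,-1}_{ijk})$ as a $2$-morphism $g_{ij}g_{jk}\to g_{ik}$.

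The cocycle identity \eqref{eq:cece} is then exactly the Gauss--Ostrogradsky identity \eqref{gauss for simplex} for the tetrahedron $ijkl$: its four triangular faces are the $\partial_m^*(\nabla,\beta)$, and \eqref{gauss for simplex} asserts the equality of the two ways of composing their face integrals, which, after translating the conjugation by the edge holonomy into the groupoid language, is $c_{ijl}c_{jkl}=c_{ikl}c_{ijk}$. Compatibility with faces and degeneracies holds because $\mu^{0,1}_i$, $g_{ij}$ and $c_{ijk}$ are all defined by pullback along the standard face inclusions $\Delta^m\to\Delta^n$, so $f^*\circ\mathbb{I}_n=\mathbb{I}_m\circ f^*$ reduces to functoriality of these pullbacks together with the fact, immediate from the initial value problems, that holonomy and two--dimensional holonomy over a degenerate simplex are trivial; naturality in $\mathfrak{g}$ is similar, since a DGLA morphism intertwines all the defining differential equations. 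Finally, property (1) is the definition; for (2) the brackets vanish, the multiplicative integrals $\MI$ collapse to ordinary iterated integrals of the forms, and $\mathbb{I}$ becomes the integration map \eqref{abelian integration}; and (3) follows by applying naturality to $\mathfrak{a}\hookrightarrow\mathfrak{g}\to\mathfrak{g}/\mathfrak{a}$ and observing, via (2), that $\mathbb{I}$ intertwines the residual $\mathfrak{a}$-action on fibres with the abelian integration, so that it is a morphism of principal fibrations.

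The main obstacle I anticipate is the bookkeeping of the \emph{twist}: the element $\mu^{0,1}$ varies over the simplex, whereas the target $2$-groupoid records only its discrete vertex values $\mu_i$ and the associated twisted groups $\exp_{\mu_i}\mathfrak{g}^{-1}$. Making the gauge--transformation and $2$-morphism statements rigorous requires showing that parallel transport along an edge intertwines the twisted crossed--module structures at its two endpoints, so that the varying twist used inside the holonomy integrals is coherently identified with the fixed twist at the base vertex; keeping the conjugations in the definitions of $\MI_{S_{\leqslant i}}$ and in \eqref{gauss for simplex} aligned with the base--point conventions of $\exp_{\mu_i}$ is the delicate point on which the well--definedness of $c_{ijk}$ and the precise form of \eqref{eq:cece} depend.
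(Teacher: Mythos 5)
Your proposal follows essentially the same route as the paper: the paper states Theorem~\ref{thm: big I} without a written-out proof (it explicitly omits technical details), but the machinery it assembles in Sections~\ref{section: diff geom} and~\ref{section: two dim hol} is intended to be used exactly as you use it --- the bidegree splitting of the Maurer--Cartan equation yielding a connection--curvature pair for the (twisted) crossed module $\mathfrak{g}^{-1}\to\mathfrak{g}^0$, edge holonomy giving the $1$-morphisms, Green's theorem giving the $2$-morphisms, and the simplicial Gauss--Ostrogradsky identity \eqref{gauss for simplex} being stated precisely so as to yield the cocycle condition \eqref{eq:cece}. Your closing remark about the varying twist $\mu^{0,1}$ correctly identifies the technical point the paper suppresses.
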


Theorem \ref{thm: big I} and induction on the nilpotency length imply the following statement.

\begin{thm}\label{thm: I is an equivalence}
The map $\mathbb{I}(\mathfrak{g})$ is an equivalence.
\end{thm}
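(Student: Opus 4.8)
The plan is to argue by induction on the nilpotency length of $\mathfrak{g}$, feeding on the three structural properties of $\mathbb{I}$ recorded in Theorem \ref{thm: big I}. Since both $\Sigma(\mathfrak{g})$ and $\mathfrak{N}\MC^2(\mathfrak{g})$ are Kan complexes, \emph{equivalence} means weak homotopy equivalence, so it suffices to compare $\pi_0$ and the higher homotopy groups through the long exact sequences attached to Kan fibrations. The base of the induction is the abelian case, and the inductive step peels off a central abelian ideal and compares the resulting principal fibrations.

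For the base case, suppose $\mathfrak{g} = \mathfrak{a}$ is abelian. By property (2) of Theorem \ref{thm: big I} the map $\mathbb{I}(\mathfrak{a})$ coincides with the integration map $\int \colon \Sigma(\mathfrak{a}) \to K(\mathfrak{a}[1]) = \mathfrak{N}\MC^2(\mathfrak{a})$. For a complex $\mathfrak{a}$ concentrated in degrees $-1,0,1$ this is the classical Dold--Kan / simplicial--de Rham integration map: a morphism of simplicial abelian groups inducing isomorphisms $\pi_i \cong H^{1-i}(\mathfrak{a})$ on homotopy groups. Hence it is an equivalence, settling the base.

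For the inductive step, assume $\mathfrak{g}$ has nilpotency length $>1$ and that the theorem holds for all nilpotent DGLAs of smaller length satisfying $\mathfrak{g}^i = 0$ for $i\leqslant -2$. Let $\mathfrak{a}$ be the last nonzero term of the lower central series of $\mathfrak{g}$; it is a graded central ideal, hence an abelian DGLA concentrated in degrees $-1,0,1$, and $\mathfrak{g}/\mathfrak{a}$ still satisfies the degree hypothesis while having strictly smaller length. Property (3) of Theorem \ref{thm: big I} then presents $\mathbb{I}$ as a morphism of principal fibrations: the top map $\mathbb{I}\colon \Sigma(\mathfrak{g}) \to \mathfrak{N}\MC^2(\mathfrak{g})$ lies over the bottom map $\mathbb{I}\colon \Sigma(\mathfrak{g}/\mathfrak{a})_0 \to \mathfrak{N}\MC^2(\mathfrak{g}/\mathfrak{a})_0$, compatibly with the morphism of structure groups $\int\colon \Sigma(\mathfrak{a}) \to K(\mathfrak{a}[1])$. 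The vertical maps are Kan fibrations whose fibers are torsors under $\Sigma(\mathfrak{a})$ and $K(\mathfrak{a}[1])$, and the induced map of fibers is exactly $\int = \mathbb{I}(\mathfrak{a})$, an equivalence by the base case. It remains to see that the base map is an equivalence; granting this, comparing the two long exact homotopy sequences and applying the five lemma (together with the four-term piece $\pi_1(\mathrm{base}) \to \pi_0(\mathrm{fiber}) \to \pi_0(\mathrm{total}) \to \pi_0(\mathrm{base})$ for $\pi_0$) forces $\mathbb{I}(\mathfrak{g})$ to be an equivalence, closing the induction. For the base map I would use that the images $\Sigma(\mathfrak{g}/\mathfrak{a})_0$ and $\mathfrak{N}\MC^2(\mathfrak{g}/\mathfrak{a})_0$ are unions of connected components of $\Sigma(\mathfrak{g}/\mathfrak{a})$ and $\mathfrak{N}\MC^2(\mathfrak{g}/\mathfrak{a})$: liftability of a Maurer--Cartan element along the central extension $\mathfrak{g} \to \mathfrak{g}/\mathfrak{a}$ is gauge-invariant, so the liftable vertices form a union of path components. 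Since $\mathbb{I}_0$ is the identity on vertices (property (1)), these two distinguished unions of components have the same vertex set and correspond under $\mathbb{I}$; thus the inductively-known equivalence $\mathbb{I}(\mathfrak{g}/\mathfrak{a})$ restricts to an equivalence on the matched sub-collections, which is precisely the base map.

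The hard part will be exactly this last reduction: verifying rigorously that $\Sigma(\mathfrak{g}/\mathfrak{a})_0$ and $\mathfrak{N}\MC^2(\mathfrak{g}/\mathfrak{a})_0$ are genuine unions of path components (this is where centrality of $\mathfrak{a}$ is essential) and that $\mathbb{I}$ identifies them, so that the inductive hypothesis on $\mathbb{I}(\mathfrak{g}/\mathfrak{a})$ transfers to the images. One must also confirm that the two projections really are Kan fibrations, so the homotopy exact sequences and the five lemma are legitimately available, and take care of basepoint compatibility across all components when invoking the five lemma. These are the points where the ``omitted technical details'' of the construction must be made precise.
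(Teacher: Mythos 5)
Your proposal is correct and takes essentially the same approach as the paper, whose entire proof is the single sentence that Theorem \ref{thm: big I} and induction on the nilpotency length imply the result. The details you supply---the abelian base case via property (2), the inductive step via the morphism of principal fibrations of property (3) with the five lemma, and the verification that $\Sigma(\mathfrak{g}/\mathfrak{a})_0$ and $\mathfrak{N}\MC^2(\mathfrak{g}/\mathfrak{a})_0$ are unions of connected components matched by $\mathbb{I}$---are precisely the technical details the paper omits.
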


\bibliographystyle{plain}
\bibliography{bibliography}

\begin{thebibliography}{1}

\bibitem{A}
I.~Ja. Aref'eva.
\newblock Nonabelian {S}tokes formula.
\newblock {\em Teoret. Mat. Fiz.}, 43(1):111--116, 1980.

\bibitem{BGNT0}
Paul Bressler, Alexander Gorokhovsky, Ryszard Nest, and Boris Tsygan.
\newblock Deligne groupoid revisited.
\newblock {\em Theory Appl. Categ.}, 30:Paper No. 29, 1001--1017, 2015.

\bibitem{Del}
Pierre Deligne.
\newblock Letter to {L}. {B}reen, 1994.

\bibitem{G1}
Ezra Getzler.
\newblock A {D}arboux theorem for {H}amiltonian operators in the formal
  calculus of variations.
\newblock {\em Duke Math. J.}, 111(3):535--560, 2002.

\bibitem{G2}
Ezra Getzler.
\newblock Lie theory for nilpotent {$L_\infty$}-algebras.
\newblock {\em Ann. of Math. (2)}, 170(1):271--301, 2009.

\bibitem{H1}
Vladimir Hinich.
\newblock Descent of {D}eligne groupoids.
\newblock {\em Internat. Math. Res. Notices}, (5):223--239, 1997.

\bibitem{Kap}
Mikhail {Kapranov}.
\newblock {Membranes and higher groupoids}.
\newblock {\em arXiv:1502.06166}, Feb 2015.

\bibitem{K}
Maxim Kontsevich.
\newblock Deformation quantization of algebraic varieties.
\newblock {\em Lett. Math. Phys.}, 56(3):271--294, 2001.
\newblock EuroConf\'{e}rence Mosh\'{e} Flato 2000, Part III (Dijon).

\bibitem{Y}
Amnon Yekutieli.
\newblock {\em Nonabelian multiplicative integration on surfaces}.
\newblock World Scientific Publishing Co. Pte. Ltd., Hackensack, NJ, 2016.

\end{thebibliography}
\end{document}